 \newenvironment{sqcases}{%
  \matrix@check\sqcases\env@sqcases
}{%
  \endarray\right.%
}
\def\env@sqcases{%
  \let\@ifnextchar\new@ifnextchar
  \left\lbrack
  \def\arraystretch{1.2}%
  \array{@{}l@{\quad}l@{}}%
}
\tikzset{every picture/.style={line width=0.75pt}}
 \newcommand{\dd}[2]{\mathrm{d} #1\wedge\mathrm{d} #2}
  \newcommand{\partt}[2]{\frac{\partial #2}{\partial #1} \ }
\newtheorem{theorem}{Theorem}[section]
\newtheorem{proposition}[theorem]{Proposition}
\newtheorem{definition}[theorem]{Definition}
\newtheorem{lemma}[theorem]{Lemma}
\theoremstyle{definition}
\newtheorem{rmk}[theorem]{Remark}
\numberwithin{equation}{section}
\numberwithin{figure}{section}
\title{A note on the geometry of the  two-body problem on $S^2$}
\author{Alessandro Arsie \ \& \ Nataliya A. Balabanova }
\date{June 2023}
\numberwithin{equation}{section}
\numberwithin{figure}{section}
\begin{document}
\maketitle 
  \begin{abstract}
  Leveraging on the results of \cite{arsie2023collision}, we carry out an investigation of the algebraic three-fold $\Sigma_{C,h}$, the common level set of the Hamiltonian and the Casimir, for the two-body problem for equal masses on $S^2$ subject to a gravitational potential of cotangent type. We determine the topology of its compactification $\overline{\Sigma}_{C,h}$ and how it bifurcates with respect to the admissible values of $(C,h)$, ($C$ being the fixed value of the Casimir and $h$ the fixed value of the Hamiltonian). This bifurcation diagram is actually equal to the bifurcation diagram that describes relative equilibria. 
  We also prove that for $h$ sufficiently negative $\Sigma_{C,h}$ is equipped with a global contact form obtained from the environment symplectic form via a suitable Liouville vector field. 
\medskip

\noindent
\textit{Keywords}: Hamiltonian systems, Dynamical systems, Contact Structure, Isoenergy surface, Topology, Compactification.

\end{abstract}
 \tableofcontents 
\section{Introduction}

The history of the interactions between Geometry and Dynamics is long and distinguished (\cite{fomenko2013algebra, ratiu2005crash}). On one hand Geometry provides a global framework to analyse dynamical behaviour, while on the other Dynamics has led to the discovery of new geometrical and topological structures (Symplectic/Poisson/Contact geometry and their topological aspects, just to name a few). 

In recent years, there has been an increased interests in using tools from Symplectic and Contact Topology to study the global structure of complicated dynamical systems. Two recent beautifully written expositions about this area are \cite{frauenfelder2018restricted} and \cite{moreno2022contact}. 
One of the main tools in this study is the realisation that often a Hamiltonian vector field on a fixed level set of the Hamiltonian is simply a positive time reparameterization of the Reeb vector field of a contact manifold (for more details see Section \ref{contact_section}). For instance, this allows one to use results coming from Contact Topology pertaining the existence of closed trajectories for the Reeb vector field to conclude for the existence of periodic solutions of a Hamiltonian system. These techniques are definitely not perturbative, so they allow one to explore regimes that are beyond the reach of tools like bifurcation theory. 
For example, given a Riemannian manifold $(M,g)$, and a mechanical Hamiltonian $H:T^*M \rightarrow \mathbb{R}$ where locally in cotagent coordinates $H(q,p):=\frac{1}{2}|p|_g+V(q)$, $|\cdot|_g$ is the norm induced by the inverse of $g$ and $V$ is smooth function on $M$, it is well known that if $c>\sup(V)$, then the energy hypersurface $\Sigma:=H^{-1}(c)$ is fibre-wise star-shaped and thus of contact type (see \cite{frauenfelder2018restricted}, Remark 2.6.5).

Our goal in this note follows this line of development, although is much more limited . We study the topology of the compactification of the common level set of a Hamiltonian and a Casimir for a Hamiltonian system that describes the interaction of two equal masses on $S^2$ interacting through a gravitation potential of cotangent type, (the generalisation of the gravitational potential on a surface of positive constant curvature). 

Using an ingenious symplectic reduction developed in \cite{borisov2018reduction}, the problem is reduced from eight dimensions to a five-dimensional Poisson manifold. Inside this five-dimensional Poisson manifold, we consider the compactified three-fold $\overline{\Sigma}_{C,h}$ obtained by taking the common level set of the (reduced) Hamiltonian $\mathcal{H} = h$ and the non-trivial Casimir $\mathcal{C} = C$.
We analyse the homeomorphism type of this three-fold and how it depends on the values of $h$ and $C$. 
After having recalled some basic facts about Contact Geometry, we show that $\Sigma_{C,h}$ is a hypersurface of contact type if $h$ is sufficiently negative.
In this case, the Hamiltonian vector field is a suitable positive time reparameterisation of the Reeb vector field. Unfortunately, since $\Sigma_{C,h}$ is not compact, we can not use the corresponding theorems about the existence of a closed characteristic for the Reeb vector field.

In general, it is a difficult problem to determine if a (regularised) energy hypersurface admits a compatible contact form. 
For instance, it has been proved in \cite{albers2012contact}
the the regularised energy hypersurface of the planar restricted three-body problem if of restricted contact-type for all energies below the one corresponding to the first Lagrange point and for those slightly above that. See also \cite{moreno2022contact} for other examples. 

This note was prompted by the desire to investigate some geometric aspects left unexplored in the work 
 \cite{arsie2023collision}, where the authors analysed collision trajectories and the regularization for the two-body problem with equal masses on $S^2$ subject precisely to a gravitational potential of cotangent type.


\section{Preliminaries}

We just sketch the setup of the problem, directing the reader towards \cite{arsie2023collision, borisov2018reduction,garcia2021attracting}, for more details.  

We assume that the two bodies of equal mass (that can be without loss of generality assumed to be equal to 1) are placed on the surface of a two-dimensional unit sphere $S^2$ and are interacting with an attracting potential. 

The in-built $SO(3)$-symmetry of the setup allows us to perform reduction with respect to the symplectic $SO(3)$ action on $T^{\ast}S^2\times T^{\ast}S^2$, leaving us with a five-dimensional system in the variables $q$, $p$, $m_1$, $m_2$, $m_3$. The potential $V(q)$ is assumed to be $-\cot(q)$. 

The reduced variables have the following physical meaning: $q$ is the angle that separates the two bodies, $p$ its Lagrangian dual, and $m_1, \ m_2, \ m_3$ are the coordinates of the angular momentum in  the coordinate system that moves with the two bodies. For convenience, we sometimes make the substitution $\xi = \cot(q)$

The system has two invariant quantities: the Hamiltonian 
\begin{equation}\label{Ham.eq1}
\mathcal{H} = \frac12\left(m_1^2 + m_2^2  - 2m_1p + 2p^2 + \xi(-2-2m_2m_3 + m_3^2\xi) + m_3^2(1 + \xi^2)\right)
\end{equation}
and the Casimir 
\[
\mathcal{C}= m_1^2 + m_2^2 + m_3^2.
\]

The symplectic structure on $T^{\ast}S^2\times T^{\ast}S^2$ reduces to a Poisson structure on $\mathbb{R}^{2}\times \mathbb{R}^3$, with   non-zero Poisson brackets in the reduced variables given by 
    \begin{align}
    \label{eq: Poiss}
        &\{m_1,m_2\} = -m_3,\ &  &\{m_2,m_3\} = -m_1 ,\nonumber\\ &\{m_1,m_3\} = m_2, \ &   \   &\{\xi,p\} = -(\xi^2+1).
    \end{align}

\section{Topology of the compactification of $\Sigma_{C,h}$}
\label{sec: topology}
\begin{figure}
\centering
  \subfigure[$C=6.02, \ h = 2.7$]{\includegraphics[scale=.3]{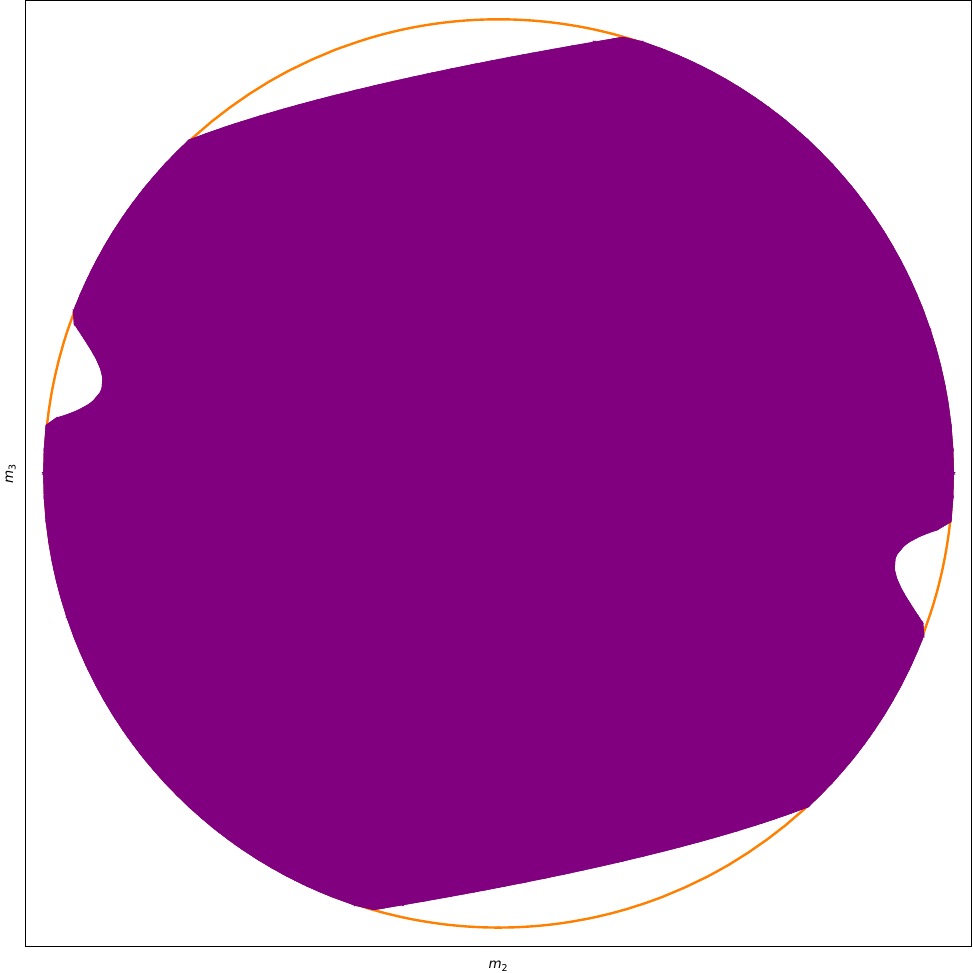}}
  \subfigure[$C=6.07, h=2.25$]{\includegraphics[scale=.3]{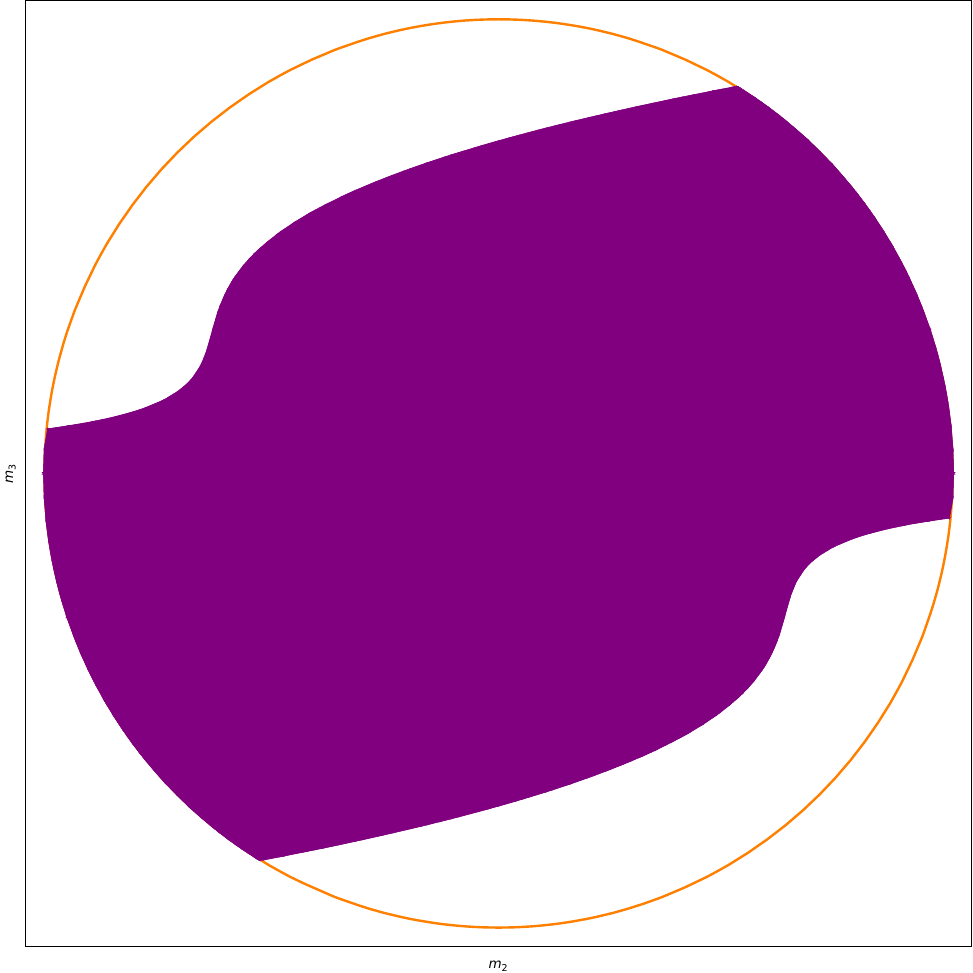}}
  \subfigure[$C=1, h=20$]{\includegraphics[scale=.3]{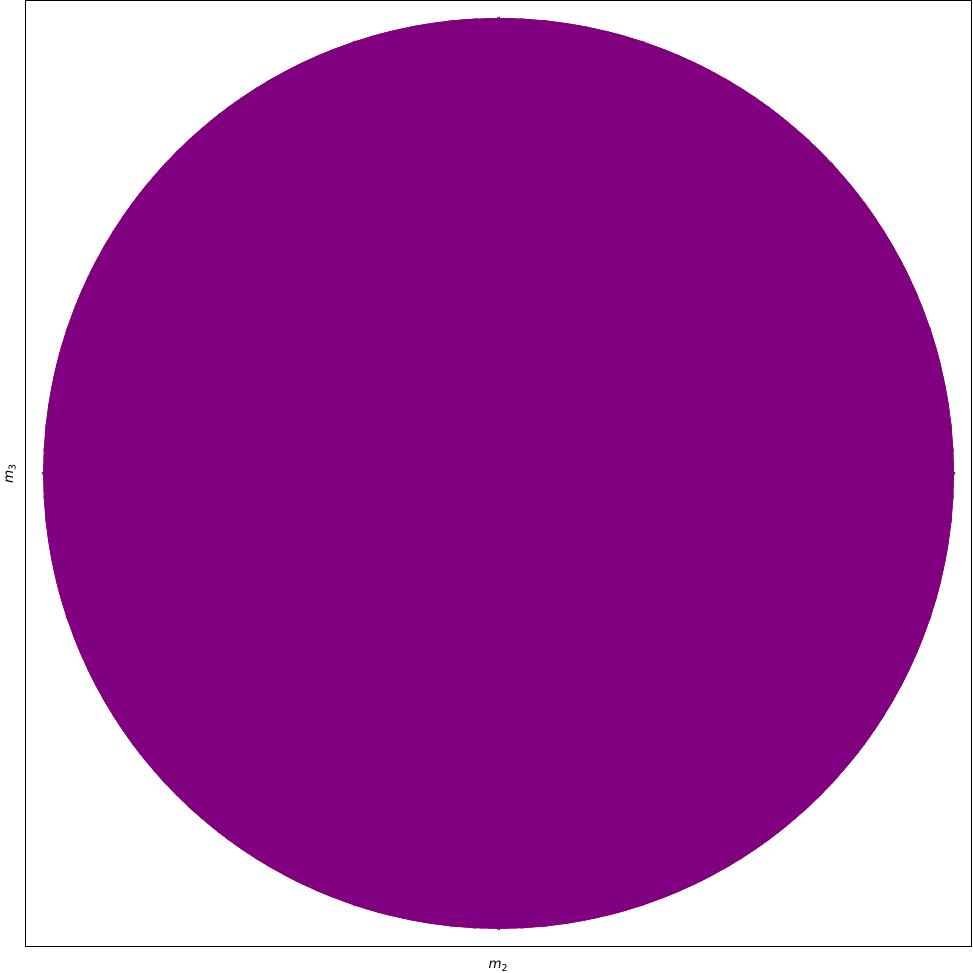}}
  \caption{Projection of $\pi(\Sigma_c)$ onto $(m_2,m_3)$-plane}
  \label{fig: sigmac}
\end{figure}

In this Section we study the compactification of the invariant variety given by the intersection of the energy hypersurface $\Sigma_h$ and the Casimir hypersurface $\Sigma_C$ in $\mathbb{R}^5$. We show that topologically this is either $S^1\times S^2$ or the connected sum of three copies of $S^1\times S^2.$ This analysis is partly inspired by the ideas detailed in \cite{bolsinov2004integrable}. One important difference however, is that the system we are considering is not integrable in general (see \cite{shchepetilov2006}), therefore it does not give rise to a two-dimensional Lagrangian foliation.

The energy hypersurface $\Sigma_h$ is given by:
\[\Sigma_h:=\left\{( \xi, p,m_1, m_2, m_3) \; |\;  \frac12\left(m_1^2 + m_2^2  - 2m_1p + 2p^2 + \xi(-2-2m_2m_3 + m_3^2\xi) + m_3^2(1 + \xi^2)\right)=h \right\},\]
while the Casimir hypersurface $\Sigma_C$ is given by:
$$\Sigma_C:=\{(m_1, m_2, m_3, \xi, p)\in \mathbb{R}^5, \text{ such that } m_1^2+m_2^2+m_3^2=C\}.$$
Notice that $\Sigma_C$ is non-empty only for $C\geq 0$ and for $C>0$ it is a four-dimensional cylinder in $\mathbb{R}^5$ of the form $S^2_{C}\times \mathbb{R}^2$, where $(\xi,p)\in \mathbb{R}^2$ and $S^2_{C}$ is the two-dimensional sphere of radius $\sqrt{C}$ in $\mathbb{R}^3$ (the space of $(m_1,m_2,m_3)$). Of course $S^2_{C}$ reduces to a point for $C=0.$ 

For the moment we consider the case $C>0.$
The invariant three-dimensional variety on which the dynamics unfolds is given by $\Sigma_{C,h}:=\Sigma_{C}\cap \Sigma_h\subset \mathbb{R}^5.$
In this case, there is a natural projection $\pi: \Sigma_C \rightarrow S^2_C$. Restricting $\pi$ to $\Sigma_{C,h}\subset \Sigma_C$, we obtain a projection which we still denote with $\pi: \Sigma_{C,h}\rightarrow S^2_C.$ As we shall see below, the image of this projection is is either $S^2_C$, or $S^2_C\setminus\{\Delta_1, \Delta_2\}$ or $S^2_C\setminus \cup_{i=1}^4 \Delta_i$, where $\Delta_i$s are open disks. In order to determine the topology of the (compactification) of $\Sigma_{C,h}$ we study the inverse image $\pi^{-1}(P)$ as $P$ varies in $S^2_C$.

Consider the equation for the   Hamiltonian (\ref{Ham.eq1})  rewritten as sum of two squares using the Casimir:
\begin{equation}
\label{eq:full squares}
2\left(p - \frac{m_1}{2}\right)^2  + 2\left(m_3\xi-\frac{m_2m_3 +1}{2m_3
}\right)^2 = 2h-C + \frac{m_1^2}{2} + 2\left(\frac{m_2m_3+1}{2m_3
}\right)^2 
\end{equation}

Using (\ref{eq:full squares}), we prove:
\begin{lemma}\label{lemma:projection}
If a point $P\in S^2_C$ belongs to the image of the projection $\pi: \Sigma_{C,h}\rightarrow S^2_C$, its pre-image in $\Sigma_{C,h}$ is one of the following:
    \begin{enumerate}
        \item an ellipse, if $P$ lies strictly inside $\mathrm{Im}\ \pi$ and not on the equator $m_3=0$;
        \item a parabola, if $P$ belongs to the equator $m_3=0$;
        \item a point, if $P$ belongs to the boundary of $\mathrm{Im}\ \pi$.
    \end{enumerate}
\end{lemma}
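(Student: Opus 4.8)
The plan is to fix a point $P=(m_1,m_2,m_3)\in S^2_C$ and to describe the fibre $\pi^{-1}(P)$ as the solution set, in the two remaining free variables $(\xi,p)$, of equation (\ref{eq:full squares}) (equivalently, of (\ref{Ham.eq1}) with $\mathcal H=h$ and the sphere coordinates $m_1,m_2,m_3$ frozen). Since $P$ is fixed, the right-hand side of (\ref{eq:full squares}) is a constant; denote it by
$$R(P):=2h-C+\frac{m_1^2}{2}+2\left(\frac{m_2m_3+1}{2m_3}\right)^2.$$
Everything then reduces to reading off the conic cut out in the $(\xi,p)$-plane, split according to whether $P$ lies on the equator $m_3=0$ or not.

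First I would treat the generic stratum $m_3\neq 0$. Here the affine change of variables $u:=p-\tfrac{m_1}{2}$, $v:=m_3\xi-\tfrac{m_2m_3+1}{2m_3}$ is invertible (this is exactly where $m_3\neq0$ is used), and (\ref{eq:full squares}) becomes $2u^2+2v^2=R(P)$, a circle in the $(u,v)$-plane whose pre-image in the $(\xi,p)$-plane is an ellipse (the $\xi$-axis being rescaled by $1/m_3$). The trichotomy is then immediate from the sign of $R(P)$: for $R(P)>0$ the fibre is a genuine ellipse, for $R(P)=0$ it collapses to a single point, and for $R(P)<0$ it is empty. Consequently, on this stratum the image of $\pi$ is the closed region $\{R\ge 0\}$, with its interior $\{R>0\}$ carrying the ellipses of case (1) and its boundary $\{R=0\}$ the points of case (3).

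It remains to handle the equator $m_3=0$, where (\ref{eq:full squares}) is invalid because of the $1/m_3$ terms. On this stratum I would return to (\ref{Ham.eq1}): setting $m_3=0$ and using $m_1^2+m_2^2=C$, the energy condition $\mathcal H=h$ collapses to $\xi=p^2-m_1p+\tfrac{C}{2}-h$, a parabola in the $(p,\xi)$-plane, which is case (2). This fibre is non-empty for every $p$, so the whole equator lies in the image; this is geometrically consistent with the generic stratum, since as $m_3\to0$ one has $R(P)\to+\infty$, placing the equator in the \emph{interior} of the image, and the ellipses degenerate (one semi-axis escaping to infinity) into the parabolas found above.

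The one genuinely delicate point, and the step I expect to require the most care, is the matching between the algebraic trichotomy and the geometric description in terms of \emph{interior} versus \emph{boundary} of $\mathrm{Im}\,\pi$: one must verify that $\{R=0\}$ is indeed the topological boundary of $\mathrm{Im}\,\pi$ inside $S^2_C$ (rather than an interior locus), and that the equator is never part of this boundary. This hinges on the qualitative behaviour of $R$ on the sphere, in particular its blow-up near $m_3=0$, and is precisely the information illustrated in Figure \ref{fig: sigmac}; making it rigorous amounts to analysing the level set $\{R=0\}\subset S^2_C$, which also feeds directly into the subsequent determination of $\mathrm{Im}\,\pi$ as the complement of the disks $\Delta_i$.
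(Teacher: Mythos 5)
Your proposal is correct and follows essentially the same route as the paper: both rest on the sum-of-squares form (\ref{eq:full squares}), treat the equator $m_3=0$ separately via the parabola obtained from (\ref{Ham.eq1}) with the Casimir, and read off the trichotomy ellipse/point/empty from the sign of the right-hand side. The ``delicate point'' you flag --- identifying $\{R>0\}$ with the interior of $\mathrm{Im}\,\pi$ and $\{R=0\}$ with its boundary --- is not addressed in the paper either (it simply asserts this correspondence), so if anything your write-up is the more careful of the two.
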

\begin{proof}
First we prove that the circle $\{m_3=0\}\subset S^2_C$ is always in the image of $\pi.$ Indeed, set $m_3=0$ and consider a point $P=(m_1, m_2, 0)\in S^2_C$. Then using \eqref{Ham.eq1} and the Casimir, $\pi^{-1}(P)$ is described by the parabola \begin{equation}\mathcal{C}_{C,h,m_1}:=\{(p ,\xi)\in \mathbb{R}^2 \text{ such that }C-2m_1p+2p^2-2\xi=2h\}\end{equation} inside $\mathbb{R}^2$ with coordinates $(\xi, p),$ while $(C,h,m_1)$ are all fixed. Incidentally, these are precisely the fibres in the projection $\pi$ that cause $\Sigma_{C,h}$ to be non-compact. 
If $P$ is strictly inside the image of the projection $\pi$ and not on the equator $\{m_3=0\},$ then the right hand side of \eqref{eq:full squares} is strictly positive and therefore $\pi^{-1}(P)$ is indeed an ellipse in the plane with coordinates $(\xi, p).$
If a point $P$ belongs to the boundary of the image of $\pi$ then the right hand side of \eqref{eq:full squares} is identically zero and this corresponds to a unique point in the plane with coordinates ($\xi, p$). Finally, if the right hand side of \eqref{eq:full squares} is strictly negative, then $P$ does not belong to the image of $\pi.$
\end{proof}
\begin{figure}
   \centering

\tikzset{every picture/.style={line width=0.75pt}} 

\begin{tikzpicture}[x=0.75pt,y=0.75pt,yscale=-1,xscale=1]

\draw   (109,31.5) .. controls (109,24.6) and (129.59,19) .. (155,19) .. controls (180.41,19) and (201,24.6) .. (201,31.5) .. controls (201,38.4) and (180.41,44) .. (155,44) .. controls (129.59,44) and (109,38.4) .. (109,31.5) -- cycle ;
\draw   (108,121.5) .. controls (108,114.6) and (128.59,109) .. (154,109) .. controls (179.41,109) and (200,114.6) .. (200,121.5) .. controls (200,128.4) and (179.41,134) .. (154,134) .. controls (128.59,134) and (108,128.4) .. (108,121.5) -- cycle ;
\draw    (109,31.5) -- (108,121.5) ;
\draw    (201,31.5) -- (200,121.5) ;
\draw [line width=3]    (174,42.5) -- (173,132.5) ;
\draw    (282,80) -- (218,80.08) ;
\draw [shift={(216,80.08)}, rotate = 359.93] [color={rgb, 255:red, 0; green, 0; blue, 0 }  ][line width=0.75]    (10.93,-3.29) .. controls (6.95,-1.4) and (3.31,-0.3) .. (0,0) .. controls (3.31,0.3) and (6.95,1.4) .. (10.93,3.29)   ;
\draw    (332,31.5) -- (331,121.5) ;
\draw  [fill={rgb, 255:red, 0; green, 0; blue, 0 }  ,fill opacity=1 ] (328.5,31.5) .. controls (328.5,29.57) and (330.07,28) .. (332,28) .. controls (333.93,28) and (335.5,29.57) .. (335.5,31.5) .. controls (335.5,33.43) and (333.93,35) .. (332,35) .. controls (330.07,35) and (328.5,33.43) .. (328.5,31.5) -- cycle ;
\draw  [fill={rgb, 255:red, 0; green, 0; blue, 0 }  ,fill opacity=1 ] (327.5,121.5) .. controls (327.5,119.57) and (329.07,118) .. (331,118) .. controls (332.93,118) and (334.5,119.57) .. (334.5,121.5) .. controls (334.5,123.43) and (332.93,125) .. (331,125) .. controls (329.07,125) and (327.5,123.43) .. (327.5,121.5) -- cycle ;
\draw   (310,46.67) .. controls (310,44.09) and (319.4,42) .. (331,42) .. controls (342.6,42) and (352,44.09) .. (352,46.67) .. controls (352,49.24) and (342.6,51.33) .. (331,51.33) .. controls (319.4,51.33) and (310,49.24) .. (310,46.67) -- cycle ;
\draw   (311,106.33) .. controls (311,103.76) and (320.4,101.67) .. (332,101.67) .. controls (343.6,101.67) and (353,103.76) .. (353,106.33) .. controls (353,108.91) and (343.6,111) .. (332,111) .. controls (320.4,111) and (311,108.91) .. (311,106.33) -- cycle ;
\draw   (292,64) .. controls (292,60.32) and (309.46,57.33) .. (331,57.33) .. controls (352.54,57.33) and (370,60.32) .. (370,64) .. controls (370,67.68) and (352.54,70.67) .. (331,70.67) .. controls (309.46,70.67) and (292,67.68) .. (292,64) -- cycle ;
\draw   (292.5,83.17) .. controls (292.5,79.48) and (309.96,76.5) .. (331.5,76.5) .. controls (353.04,76.5) and (370.5,79.48) .. (370.5,83.17) .. controls (370.5,86.85) and (353.04,89.83) .. (331.5,89.83) .. controls (309.96,89.83) and (292.5,86.85) .. (292.5,83.17) -- cycle ;

\draw (242,55.75) node [anchor=north west][inner sep=0.75pt]   [align=left] {$\displaystyle \pi $};

\end{tikzpicture}

    \caption{When  $\pi(\overline{\Sigma}_{C,h})$ is a sphere with two holes (i,e, a cylinder), the preimage of the every 'vertical' interval on this cylinder is a sphere, entailing $\overline{\Sigma}_{C,h}\simeq S^1\times S^2$.}
  \label{fig:preimage pi}
\end{figure}
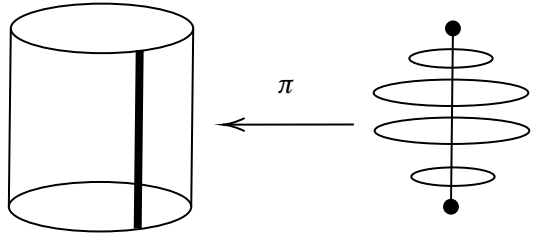

As it was stated in the proof of Lemma \ref{lemma:projection}, the preimage via $\pi$ of the circle $\{m_3 = 0\}\subset S^2_C$  is responsible for the non-compactness of $\Sigma_{C,h}$.

However, that can be easily remedied by adding one point to each parabola and thus making them diffeomorphic to circles. In this way, we get a new compactified variety $\overline{\Sigma}_{C,h}$, which we proceed to project onto $S^2_C$ in the same manner; now the preimage of each point on the sphere is either empty, or a point or a curve diffeomorphic to a circle.

\begin{proposition}\label{prop3.2}
The compactification $\overline{\Sigma}_{C,h}$ is a hypersurface in $S^2\times S_C^2.$
\end{proposition}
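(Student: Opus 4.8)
The plan is to exhibit the ambient $S^2\times S^2_C$ explicitly, describe what the compactification adjoins, and then reduce the statement to a local analysis at infinity. Recall that $\Sigma_C=S^2_C\times\mathbb{R}^2$, the second factor carrying the coordinates $(\xi,p)$. Eliminating $m_1^2+m_2^2+m_3^2$ in favour of $C$ in \eqref{Ham.eq1}, the condition $\mathcal{H}=h$ restricted to $\Sigma_C$ becomes the single quadratic equation
\begin{equation}\label{eq:definingF}
F(\xi,p;m):=2p^2-2m_1p+2m_3^2\xi^2-2(1+m_2m_3)\xi+(C-2h)=0,
\end{equation}
whose coefficients depend on $m=(m_1,m_2,m_3)\in S^2_C$; in each fiber $\{m\}\times\mathbb{R}^2$ of $\pi$ this is exactly the conic of Lemma \ref{lemma:projection}. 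I would take as ambient manifold $S^2\times S^2_C$, where the first factor is the one-point compactification $S^2=\mathbb{R}^2\cup\{\infty\}$ of the $(\xi,p)$-plane, and define $\overline{\Sigma}_{C,h}$ to be the closure of $\Sigma_{C,h}$ inside it.

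First I would identify the set adjoined in passing to the closure. By Lemma \ref{lemma:projection}, over $P$ strictly interior to $\mathrm{Im}\,\pi$ and off the equator the fiber is a compact ellipse and nothing is added; over $\partial(\mathrm{Im}\,\pi)$ it is a single point; and over the equator $\{m_3=0\}$, which always lies in $\mathrm{Im}\,\pi$, the leading part $2p^2+2m_3^2\xi^2$ of $F$ degenerates to $2p^2$, so the fiber is a parabola. Both ends of each such parabola run off to infinity in the $(\xi,p)$-plane, hence its closure in $\{m\}\times S^2$ adjoins the single point $(m,\infty)$ and becomes a topological circle, matching the informal description preceding the statement. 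Thus
\[
\overline{\Sigma}_{C,h}=\Sigma_{C,h}\ \cup\ \big(\{m_3=0\}\times\{\infty\}\big),
\]
and the adjoined locus is the circle $\{m_3=0\}\times\{\infty\}$.

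On the finite part $\Sigma_{C,h}\subset S^2_C\times\mathbb{R}^2$ the hypersurface property follows from the regular value theorem. The fiberwise differential of $F$ is
\[
(\partial_\xi F,\partial_p F)=\big(4m_3^2\xi-2(1+m_2m_3),\,4p-2m_1\big),
\]
which never vanishes along $\{F=0\}$ except possibly at the degenerate point-fibers over $\partial(\mathrm{Im}\,\pi)$: indeed on the equator $\partial_\xi F=-2\neq0$, while on an interior ellipse the only fiberwise critical point is its center, where $F\neq0$. Hence, for $(C,h)$ off the bifurcation diagram, $F$ cuts out a smooth codimension-one submanifold of $\Sigma_C$, so the finite part of $\overline{\Sigma}_{C,h}$ is already a hypersurface.

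The remaining and, I expect, decisive step is the behaviour along the circle at infinity. I would pass to an inversion chart $(a,b)=(\xi,p)/(\xi^2+p^2)$ centred at $\infty$ and clear denominators, obtaining the polynomial
\[
\tilde F(a,b;m)=2b^2+2m_3^2a^2-2m_1b(a^2+b^2)-2(1+m_2m_3)a(a^2+b^2)+(C-2h)(a^2+b^2)^2
\]
that extends \eqref{eq:definingF} across $a=b=0$. The difficulty is that the full differential of $\tilde F$ vanishes identically along $\{a=b=0,\ m_3=0\}$, so this naive zero set is singular there and the regular value theorem no longer applies; moreover each compactified parabola acquires a cusp at $\infty$ in this chart. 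The task is therefore to show that, despite the individual fibers being only topological circles at infinity, the three-dimensional total space $\overline{\Sigma}_{C,h}$ is nonetheless a topologically embedded hypersurface along the whole circle $\{m_3=0\}\times\{\infty\}$. Establishing this through the local model of $\{\tilde F=0\}$ in the inversion chart, as $m$ ranges over the equator, is the technical heart of the argument.
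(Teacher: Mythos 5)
Your setup coincides with the paper's: the ambient manifold is $S^2\times S^2_C$, with the first factor the one-point compactification of the $(\xi,p)$-plane (the paper realises it via inverse stereographic projection, $\xi=\tfrac{x}{1-z}$, $p=\tfrac{y}{1-z}$, the north pole playing the role of your $\infty$), and your identification of the adjoined locus as the circle $\{m_3=0\}\times\{\infty\}$ agrees with the paper. The divergence is in what ``hypersurface'' is taken to mean, and this is where the gap lies. The paper's entire proof is one computation: substitute the stereographic coordinates into $\mathcal{H}=h$, multiply by $(1-z)^2$, and exhibit $\overline{\Sigma}_{C,h}$ as the common zero set of the three polynomials \eqref{3poly.eq} --- i.e.\ as the locus cut out by a \emph{single} polynomial equation inside the four-manifold $S^2\times S^2_C$. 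No smoothness or embeddedness at the added circle is claimed, and none holds: the lemma immediately following this proposition in the paper proves $\mathrm{Sing}(\overline{\Sigma}_{C,h})=\overline{\Sigma}_{C,h}\setminus\Sigma_{C,h}$, which is precisely your observation that the differential of your $\tilde F$ vanishes identically along $\{a=b=0,\ m_3=0\}$ and that each compactified parabola acquires a cusp there. Note also that the proposition carries no hypothesis $(C^2,h)\notin\gamma$, whereas your finite-part argument needs one (and, incidentally, at the point-fibres over $\partial(\mathrm{Im}\,\pi)$ the fibrewise differential of your $F$ vanishes, so smoothness there requires the $m$-derivatives --- this is what Lemma \ref{lemma.smooth} supplies separately via relative equilibria); this mismatch is a signal that the intended notion cannot be ``smooth submanifold.''

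Because you instead set out to prove that $\overline{\Sigma}_{C,h}$ is a \emph{topologically embedded} hypersurface along the circle at infinity, your write-up ends by declaring the decisive step --- the local analysis of $\{\tilde F=0\}$ near $\{a=b=0,\ m_3=0\}$ --- to be ``the technical heart'' and leaves it unproven. As it stands, the proposal therefore does not establish the proposition: it stops exactly where the (stronger, and nowhere needed) statement you chose becomes hard; the paper's subsequent topological arguments use only the fibrewise description of $\pi$ and the one-point-per-parabola compactification, never manifoldness at infinity. The repair is short: your own homogenised polynomial $\tilde F$ (equivalently, the third equation of \eqref{3poly.eq}) already realises the compactification as the zero set of one equation on $S^2\times S^2_C$, which is the assertion --- the singularity you detected is a feature, not an obstruction. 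One caveat worth recording, for your argument and the paper's alike: the homogenised equation vanishes identically at the north pole (where $x=y=0$, $z=1$), so its zero locus contains the whole fibre $\{\infty\}\times S^2_C$ and not merely the closure $\Sigma_{C,h}\cup(\{m_3=0\}\times\{\infty\})$; a fully precise statement should either define $\overline{\Sigma}_{C,h}$ as the closure sitting inside that algebraic hypersurface, or discard $\{\infty\}\times\{m_3\neq 0\}$ explicitly. This does not affect how the proposition is used later, but it is exactly the kind of discrepancy your closure-based definition makes visible.
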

\begin{proof}
To formalise what we have said above, we apply the inverse stereographic projection to the $(\xi,p)$ plane: the North pole will serve as the 'additional' point we glue to the parabolae in order to close them. We substitute $\xi  =\frac{x}{1-z}, \ p = \frac{y}{1-z}$ and multiply $\mathcal{H}$ by $(1-z)^2$. In doing so, we describe $\overline{\Sigma}_{C,h}$, the compactification of $\Sigma_{C,h}$, as the common level set of the three polynomials in $\mathbb{R}^6$. After a straightforward computation, these are given by:
\begin{equation}\label{3poly.eq}
\overline{\Sigma}_{C,h} =\begin{cases}m_1^2 + m_2^2 + m_3^2 = C, \\x^2 + y^2 + z^2 = 1,\\ (m_1^2 + m_2^2 + m_3^2-2h)(1-z)^2 + 2y^2 - 2m_1y(1-z) + 2m_3^2x^2 - 2x(1-z)(1+m_2m_3) = 0\end{cases},
\end{equation}
which is indeed a hypersurface in $S^2\times S_C^2$. 
\end{proof}

In \cite{borisov2018reduction}, the authors present a diagram in the $(C^2,h)$-plane (see Figure 9 (b) in the aforementioned work) the describes the bifurcation of relative equilibria (in our case, equilibria in the reduced coordinates $(m_1, m_2, m_3, \xi, p)$) according to different values of $h$ and $C^2$. In particular, the curves in that diagram describe the pairs $(C^2,h)$ for which relative equilibria exist. Outside those curves, relative equilibria do not exist. For convenience we call the union of all those curves $\gamma$. In Subsection \ref{bif.sub1} we are going to prove that this bifurcation diagram is also the bifurcation diagram that controls the change in the topology of $\overline{\Sigma}_{C,h}.$ 

\begin{lemma}\label{lemma.smooth}
    The variety $\Sigma_{C,h}$ is smooth whenever $(C^2,h)\notin \gamma.$  
\end{lemma}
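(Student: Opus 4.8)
The plan is to use the standard regular-value criterion. The variety $\Sigma_{C,h}$ is the common zero locus in $\mathbb{R}^5$ of the two functions $F_1 := \mathcal{C} - C$ and $F_2 := \mathcal{H} - h$, so it is a smooth codimension-two submanifold at every point where the differentials $d\mathcal{C}$ and $d\mathcal{H}$ are linearly independent. Thus the singular points of $\Sigma_{C,h}$ are precisely those points of the variety at which $d\mathcal{C}$ and $d\mathcal{H}$ are linearly dependent. Since $d\mathcal{C} = 2(m_1\,dm_1 + m_2\,dm_2 + m_3\,dm_3)$ vanishes only at the origin of the $(m_1,m_2,m_3)$-space, i.e. only when $C=0$, in the case $C>0$ under consideration we have $d\mathcal{C}\neq 0$ on all of $\Sigma_C$, and linear dependence is then equivalent to the existence of $\lambda\in\mathbb{R}$ with $d\mathcal{H} = \lambda\, d\mathcal{C}$.

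Next I would interpret this condition dynamically. For the Poisson structure \eqref{eq: Poiss} the symplectic leaves are exactly the level sets $\{\mathcal{C}=C\}$ of the Casimir, and on a regular leaf ($C>0$) the kernel of the Poisson anchor $\pi^\sharp : T^*\mathbb{R}^5 \to T\mathbb{R}^5$ is one-dimensional and spanned by $d\mathcal{C}$ (this follows from the block form of \eqref{eq: Poiss}: the $(\xi,p)$-block is non-degenerate since $\{\xi,p\}=-(\xi^2+1)\neq 0$, while the $\mathfrak{so}(3)^\ast$-block has rank two away from $m=0$, with kernel $d\mathcal{C}$). Hence the reduced vector field $X_{\mathcal{H}} = \{\,\cdot\,,\mathcal{H}\}$ vanishes at a point $x$ if and only if $d\mathcal{H}_x \in \ker\pi^\sharp_x$, that is, if and only if $d\mathcal{H}_x = \lambda\, d\mathcal{C}_x$ for some $\lambda$; equivalently, by the method of Lagrange multipliers, these are the critical points of $\mathcal{H}$ restricted to the leaf $\Sigma_C$. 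In other words, the singular points of $\Sigma_{C,h}$ coincide exactly with the equilibria of the reduced system (the relative equilibria) lying on the leaf $\{\mathcal{C}=C\}$ at energy $\{\mathcal{H}=h\}$.

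It then remains to invoke the classification of relative equilibria recalled just before the lemma: by construction $\gamma$ is exactly the locus in the $(C^2,h)$-plane for which relative equilibria exist, so outside $\gamma$ there are none. Combining this with the identification of the previous paragraph, if $(C^2,h)\notin\gamma$ then $\Sigma_{C,h}$ contains no point at which $d\mathcal{H}$ and $d\mathcal{C}$ are linearly dependent, and the regular-value criterion yields that $\Sigma_{C,h}$ is smooth.

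The main obstacle is the bookkeeping needed to make the identification in the second paragraph precise and, in particular, to match it against the bifurcation diagram of \cite{borisov2018reduction}: one must verify that the relative equilibria of that reference (obtained there from the vanishing of the reduced vector field) are cut out by exactly the same system $d\mathcal{H} = \lambda\, d\mathcal{C}$, and reconcile the Casimir conventions (the value $C=m_1^2+m_2^2+m_3^2$ used here versus the $C^2$ parametrizing $\gamma$). Once the anchor-kernel computation for \eqref{eq: Poiss} is carried out explicitly, confirming that $\ker\pi^\sharp$ is spanned by $d\mathcal{C}$ on each leaf with $C>0$, the correspondence between singular points and relative equilibria — and hence the statement — follows.
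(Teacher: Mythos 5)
Your proposal is correct and follows essentially the same route as the paper's proof: both identify linear dependence of $d\mathcal{H}$ and $d\mathcal{C}$ with vanishing of the reduced Hamiltonian vector field, using that the kernel of the Poisson anchor on the leaf $\{\mathcal{C}=C\}$, $C>0$, is spanned by $d\mathcal{C}$, and then invoke the definition of $\gamma$ as the locus of relative equilibria. If anything, your treatment is slightly cleaner, since handling $d\mathcal{H}=\lambda\, d\mathcal{C}$ uniformly (including $\lambda=0$) avoids the paper's separate discussion of the cases $d\mathcal{H}=0$ and $d\mathcal{H}$ proportional to $d\mathcal{C}$ by a nowhere-zero factor.
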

\begin{proof}
 We need to show that the two differentials $\mathrm{d}\mathcal{H}$ and $\mathrm{d}\mathcal{C}$ are linearly independent on $\Sigma_h\cap\Sigma_C.$

First, if $(C^2,h)\notin \gamma$, then $d\mathcal{H}$ is never zero on $\Sigma_{C,h},$ since if it were zero somewhere, it would give rise to relative equilibria.  

Note that the Poisson structure $\sigma$ (see equation \ref{eq: Poiss})) has rank 4 everywhere on the points of $\Sigma_{C,h}$, since in particular it has rank 4 on $\Sigma_C.$ Therefore its kernel is one-dimensional and it is spanned by $d\mathcal{C}$
on the points of $\Sigma_{C,h},$ since there $d\mathcal{C}$ is always non-vanishing there. 

Therefore, both $d\mathcal{H}$ and $d\mathcal{C}$ are nowhere vanishing on $\Sigma_{C,h}$. Thus they are linearly dependent iff they are proportional via a non-where zero factor. On the other hand if $d\mathcal{H}$ were proportional to $d\mathcal{C}$ at a point of $\Sigma_{C,h}$, then the corresponding Hamiltonian vector field would have an equilibrium there, while we are considering the pairs $(C,h)$ for which relative equilibria do not exist (i.e. equilibria for the Hamiltonian system in $(m_1 m_2, m_3, \xi, p)).$ 
This proves that $\Sigma_{C,h}$ is indeed smooth if the pair $(C^2,h)\notin \gamma.$ 
\end{proof}

\begin{lemma} Suppose the pair $(C^2,h)\notin \gamma$. Then $\mathrm{Sing}(\overline{\Sigma}_{C,h})=\overline{\Sigma}_{C,h}\setminus \Sigma_{C,h},$ where $\mathrm{Sing}$ is the singularity locus.
\end{lemma}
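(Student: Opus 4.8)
The plan is to establish the set equality by two inclusions, after first making the right-hand side explicit. Denote by $F$ the third polynomial in \eqref{3poly.eq}. By Proposition~\ref{prop3.2}, $\overline{\Sigma}_{C,h}$ is the hypersurface $\{F=0\}$ inside the smooth $4$-manifold $S^2\times S^2_C$, so $\mathrm{Sing}(\overline{\Sigma}_{C,h})$ is exactly the locus where the restriction of $\mathrm{d}F$ to $T(S^2\times S^2_C)$ vanishes. The inverse stereographic chart $\xi=x/(1-z)$, $p=y/(1-z)$ used to derive \eqref{3poly.eq} covers $S^2\setminus\{N\}$, with $N=(0,0,1)$ the North pole, and identifies $\Sigma_{C,h}$ with $\overline{\Sigma}_{C,h}\cap\{z\ne1\}$. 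On the complementary locus $z=1$ the second equation of \eqref{3poly.eq} forces $x=y=0$, while $F$ vanishes identically there; hence I would first record that
\[
\overline{\Sigma}_{C,h}\setminus\Sigma_{C,h}=\{N\}\times S^2_C,
\]
the whole sphere at infinity.

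For the inclusion $\overline{\Sigma}_{C,h}\setminus\Sigma_{C,h}\subseteq\mathrm{Sing}(\overline{\Sigma}_{C,h})$ the key observation is that every monomial of $F$ is divisible by one of $(1-z)^2$, $x^2$, $y^2$, $x(1-z)$, $y(1-z)$; in other words $F$ lies in the square of the ideal $\mathfrak{m}=(x,y,1-z)$. Consequently each first-order partial derivative of $F$ lies in $\mathfrak{m}$, so the full differential $\mathrm{d}F$ vanishes at every point of $\{x=y=0,\ z=1\}$; since the full differential already vanishes, so does its restriction to $T(S^2\times S^2_C)$. Thus every point of $\{N\}\times S^2_C$ is singular. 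This is a short explicit computation and I expect no difficulty here.

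For the reverse inclusion I would work in the chart $z\ne1$. The substitution above is a diffeomorphism of $S^2_C\times\mathbb{R}^2$ onto $S^2_C\times(S^2\setminus\{N\})$, under which $F=2(1-z)^2(\mathcal{H}-h)$ with $(1-z)^2>0$. Hence along $\{F=0\}=\{\mathcal{H}=h\}$ one has $\mathrm{d}F=2(1-z)^2\,\mathrm{d}\mathcal{H}$, whose restriction to $T(S^2\times S^2_C)$ equals, up to the nonzero factor $2(1-z)^2$, the restriction of $\mathrm{d}\mathcal{H}$ to $T(S^2_C\times\mathbb{R}^2)$. Since $(C^2,h)\notin\gamma$, Lemma~\ref{lemma.smooth} gives that $\mathrm{d}\mathcal{H}$ and $\mathrm{d}\mathcal{C}$ are independent along $\Sigma_{C,h}$, which is precisely the statement that this restriction is nonzero. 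Therefore no point with $z\ne1$ is singular, giving $\mathrm{Sing}(\overline{\Sigma}_{C,h})\subseteq\{N\}\times S^2_C$ and, with the first inclusion, the claimed equality.

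The conceptual point to get right — and the only place where care is needed — is that the algebraic compactification \eqref{3poly.eq} adds the entire sphere $\{N\}\times S^2_C$ rather than only the points needed to close up the parabolic fibres over the equator $\{m_3=0\}$ of Lemma~\ref{lemma:projection}: over a point with $m_3\ne0$ the bounded elliptic fibre stays away from $N$, so the corresponding $N$ is an isolated (spurious) point of the variety and not a limit of $\Sigma_{C,h}$. The lemma asserts exactly that this entire added locus is where $\overline{\Sigma}_{C,h}$ fails to be smooth. The remaining bookkeeping is to note that the two descriptions of the singular locus — as a hypersurface in $S^2\times S^2_C$, and as the common zero set of the three polynomials of \eqref{3poly.eq} in $\mathbb{R}^6$ — coincide, because the differentials of the two sphere equations are everywhere independent on $S^2\times S^2_C$, so the ambient product is itself smooth.
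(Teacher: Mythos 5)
Your proof is correct and follows the same skeleton as the paper's: both arguments establish the two inclusions separately, both use Lemma~\ref{lemma.smooth} to exclude singular points from $\Sigma_{C,h}$, and both show that the differential of the third defining polynomial degenerates along $\{z=1\}$. The differences are in execution, and on two points your write-up is sharper than the paper's. For the inclusion $\mathrm{Sing}(\overline{\Sigma}_{C,h})\subseteq\overline{\Sigma}_{C,h}\setminus\Sigma_{C,h}$, the paper simply asserts that outside $z=1$ the matrix \eqref{gradient.eq} has full rank ``by Lemma~\ref{lemma.smooth}''; your factorization $F=2(1-z)^2(\mathcal{H}-h)$ in the stereographic chart, combined with the remark that the two sphere differentials are everywhere independent, is precisely the missing justification that smoothness of $\Sigma_{C,h}\subset\mathbb{R}^5$ is equivalent to full rank of \eqref{gradient.eq} at points with $z\neq1$. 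For the reverse inclusion, your observation that $F$ lies in the square of the ideal $(x,y,1-z)$ is a cleaner, coordinate-free version of the paper's evaluation of \eqref{gradient.eq} at $z=1$, $x=y=0$ (where the third column vanishes identically). Your explicit identification $\overline{\Sigma}_{C,h}\setminus\Sigma_{C,h}=\{N\}\times S^2_C$ also makes precise something the paper glosses over: the algebraic model \eqref{3poly.eq} adds an entire $2$-sphere at infinity, not merely one point per parabolic fibre over the equator as the informal discussion preceding Proposition~\ref{prop3.2} suggests, and the lemma is cleanest when read against the algebraic model, as you do.

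One slip, harmless but worth correcting: the spurious points $(N,m)$ with $m_3\neq0$ are \emph{not} isolated points of the variety --- they lie on the $2$-dimensional component $\{N\}\times S^2_C$ that you yourself identified as being contained in \eqref{3poly.eq}. What is true, and what you actually mean in that paragraph, is that such a point is not a limit of points of $\Sigma_{C,h}$, since the elliptic fibres over a neighbourhood of such an $m$ are uniformly bounded by \eqref{eq:full squares}. Since no step of your two inclusions uses isolatedness, this does not affect the correctness of the proof.
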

\begin{proof}
By Lemma \ref{lemma.smooth}, it is clear that $\mathrm{Sing}(\overline{\Sigma}_{C,h})\subset\overline{\Sigma}_{C,h}\setminus \Sigma_{C,h}$.
Furthermore, as a variety $\overline{\Sigma}_{C,h}$
fails to be smooth precisely at those points that belong to the common level set \eqref{3poly.eq} for which the Jacobian matrix (or its transpose) of the system \eqref{3poly.eq} fails to have maximum rank. This matrix is given by
\begin{equation}\label{gradient.eq}
\begin{bmatrix}
    2x & 0 &   4\,{{\it m_3}}^{2}x- \left( 2-2\,z \right)  \left( 1+{\it m_2}\,
{\it m_3} \right)  
 \\
    2y & 0 &  4\,y-2\,{\it m_1}\, \left( 1-z
 \right)  
\\
    2z & 0 &  - \left( 2\,C-4\,h \right)  \left( 1-z
 \right) +2\,{\it m_1}\,y+2\,x \left( 1+{\it m_2}\,{\it m_3} \right) 
 
\\
    0 & 2m_1 & -2\,y \left( 1-z \right)\\
    0 & 2m_2 & -2\,x \left( 1-z \right) {\it 
m_3}\,
\\
    0 & 2m_3 &  4\,{\it m_3}\,{x}^{2}-2\,x \left( 1-z \right) {\it 
m_2} 
\end{bmatrix}
\end{equation}
It is clear that the rank of this matrix falls when $z=1$. There are precisely the points that are added to $\Sigma_{C,h}$ in order to make it compact. Outside those points, by Lemma \ref{lemma.smooth} the matrix above has necessarily full rank, provided the pair $(C^2,h)\notin \gamma.$ 
\end{proof}

\begin{proposition}
Let $\pi:\overline{\Sigma}_{C,h}\to S^2_C$ be the usual projection from the compactified variety. Then $\mathrm{Im}\ \pi$ can be:
\begin{enumerate}
    \item a sphere with no holes;
    \item a sphere with two holes;
    \item a sphere with four holes.
\end{enumerate}
\end{proposition}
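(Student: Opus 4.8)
The plan is to reduce the statement to counting the connected components of a sublevel set of one explicit function on $S^2_C$. By Lemma \ref{lemma:projection} together with \eqref{eq:full squares}, a point $P=(m_1,m_2,m_3)\in S^2_C$ lies in $\mathrm{Im}\,\pi$ if and only if the right-hand side of \eqref{eq:full squares} is non-negative; writing that right-hand side as $F:=2h-C+G$ with
\[
G(m_1,m_2,m_3):=\frac{m_1^2}{2}+\frac{(m_2 m_3+1)^2}{2m_3^2},
\]
(with the convention $G\equiv+\infty$ on the equator $\{m_3=0\}$, where the fibre is the always-present parabola), the complement of $\mathrm{Im}\,\pi$ in $S^2_C$ is exactly the open sublevel set $\{G<c\}$, where $c:=C-2h$. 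Thus the holes of $\mathrm{Im}\,\pi$ are the connected components of $\{G<c\}$, and I must show there are $0$, $2$, or $4$ of them, each a disk.

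Next I would study $G$ on $S^2_C\setminus\{m_3=0\}$. It is smooth, satisfies $G\ge 0$, and $G\to+\infty$ as $m_3\to 0$, so on each open hemisphere $N=\{m_3>0\}$ and $S=\{m_3<0\}$ — each a topological disk — the restriction is proper and bounded below. The involution $(m_1,m_2,m_3)\mapsto(m_1,-m_2,-m_3)$ preserves $G$ and exchanges $N$ and $S$, so it suffices to analyse one hemisphere and double the count; in particular the number of holes is automatically even. A Lagrange-multiplier computation for the critical points of $G$ on $\{m_1^2+m_2^2+m_3^2=C\}$ forces $m_1=0$ and then factors as $(m_2 m_3+1)(m_2+m_3^3)=0$, yielding two families: $\mathrm{(I)}$ $\{m_1=0,\ m_2 m_3=-1\}$, on which $G=0$, and $\mathrm{(II)}$ $\{m_1=0,\ m_2=-m_3^3\}$, on which $G=\tfrac{(1-w^2)^2}{2w}$ with $w:=m_3^2$ determined by $w^3+w=C$.

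Then I would determine the existence and Morse type of these critical points and count sublevel components. Family $\mathrm{(II)}$ always contributes one point per hemisphere with critical value $v_*(C):=\tfrac{(1-w^2)^2}{2w}$, while family $\mathrm{(I)}$ is non-empty only for $C\ge 2$, contributing (for $C>2$) two global minima per hemisphere with $G=0$. Since $G$ is proper on each hemisphere-disk, the relation $\#\text{minima}-\#\text{saddles}+\#\text{maxima}=\chi(\mathrm{disk})=1$ pins down the types: for $C>2$ the two points of $\mathrm{(I)}$ are minima and the single point of $\mathrm{(II)}$ is the connecting saddle, whereas for $C<2$ the single point of $\mathrm{(II)}$ is the unique minimum. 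Tracking $\{G<c\}$ on one hemisphere then gives, for $C>2$: empty if $c\le 0$; two disks if $0<c<v_*(C)$; one disk if $c>v_*(C)$ (the two basins merge at the saddle into a single disk, not an annulus, since the saddle joins two distinct basins inside a disk); and for $C\le 2$: empty if $c<v_*(C)$, one disk if $c>v_*(C)$. Doubling, the number of holes is $0$, $4$, or $2$ when $C>2$, and $0$ or $2$ when $C\le 2$, each hole an open disk; hence $\mathrm{Im}\,\pi=S^2_C\setminus\{G<c\}$ is a sphere with $0$, $2$, or $4$ holes. Compactification does not change the image, since it only adds points over the equator, which already lies in $\mathrm{Im}\,\pi$.

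The main obstacle is the Morse-theoretic bookkeeping on the non-compact hemispheres: one must verify that the critical points are non-degenerate for $(C^2,h)\notin\gamma$ (so that $c$ is a regular value and $\{G=c\}$ is a disjoint union of circles bounding the holes), correctly identify minima versus the saddle via the Euler-characteristic count, and handle the degenerate transition at $C=2$, where the two minima of $\mathrm{(I)}$ collide with the saddle of $\mathrm{(II)}$ — this collision is precisely what separates the four-hole chamber ($C>2$) from the regime where only $0$ or $2$ holes occur. Checking that each merged sublevel component is a genuine disk, and that the critical values $c=0$ and $c=v_*(C)$ are exactly the walls of $\gamma$, completes the identification of the three cases.
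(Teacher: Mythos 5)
Your argument is correct, but it takes a genuinely different route from the paper's. The paper's proof is elementary and computational: after substituting $m_1^2 = C - m_2^2 - m_3^2$ it works in the $(m_2,m_3)$-disk, writes the non-emptiness condition as $2h-\frac{C}{2}-\frac{m_3^2}{2}+\frac{1}{2m_3^2}+\frac{m_2}{m_3}\ge 0$, and counts intersections of the boundary curve with the circle $m_2^2+m_3^2=C$ through the even, degree-eight polynomial \eqref{eq: bif}; parity of the root count (equivalently, the symmetry $(m_2,m_3)\mapsto(-m_2,-m_3)$) then yields the values $0$, $2$, $4$. You instead run Morse theory for $G=\frac{m_1^2}{2}+\frac{(m_2m_3+1)^2}{2m_3^2}$ on $S^2_C$, identify the holes with components of the sublevel set $\{G<C-2h\}$, confine them to the two open hemispheres $\{\pm m_3>0\}$ (on each of which $G$ is proper and bounded below, since $G\to+\infty$ uniformly at the equator), and get the evenness from the $G$-preserving involution $(m_1,m_2,m_3)\mapsto(m_1,-m_2,-m_3)$; your Lagrange computation (forcing $m_1=0$ and the factorisation $(m_2m_3+1)(m_2+m_3^3)=0$), the critical values $0$ and $v_*(C)=\frac{(1-w^2)^2}{2w}$ with $w^3+w=C$, and the constraint $C\ge 2$ for family (I) are all correct. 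Your approach buys several things the paper's does not: a genuine proof that each hole is an open disk; a tight link between the number of holes and the number of sublevel components, whereas the paper's identification of the number of holes with the number of solutions of \eqref{eq: sys m2m3} is asserted rather than argued; the explicit chamber structure ($0$ holes for $C-2h\le 0$; $4$ holes for $0<C-2h<v_*(C)$, possible only when $C>2$; $2$ holes for $C-2h>v_*(C)$), so the bifurcation diagram of Subsection \ref{bif.sub1} falls out for free; and, since your families (I) and (II) are precisely the relative-equilibrium configurations \eqref{eq: relative equilibria q m3}, a conceptual explanation of why that diagram coincides with the one for relative equilibria, a fact the paper verifies separately by brute-force substitution. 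The paper's approach, in exchange, stays entirely within elementary algebra and needs no non-degeneracy hypotheses. Your two flagged gaps are real but routine: non-degeneracy of the critical points for $C\neq 2$ is a finite Hessian check (at family (I) it amounts to transversality on $S^2_C$ of the curves $\{m_1=0\}$ and $\{m_2m_3=-1\}$, which holds exactly when $C>2$), and the merged sublevel component being a disk rather than an annulus follows because a $1$-handle attached to two distinct disk components produces a disk, while attaching it to a single component would leave the exhausting sublevel sets of the hemisphere disconnected for all larger values, contradicting connectedness of the hemisphere.
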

\begin{proof}
The proof is purely computational. 

 Observe that in order for the preimage of a point $P\in S^2_C$ not to be empty,  the left hand side of (\ref{eq:full squares}) must be non-negative at $P$, i.e.
\begin{equation}
\label{eq: prelim m2m3}
2h-C  + \frac{m_1^2}{2} + 2m_3^2\left(\frac{m_2m_3+1}{2m_3^2}\right)^2\ge 0,
\end{equation}
Since we are considering a subset of $S^2_C$, $m_1^2$ may be replaced by $C-m_2^2-m_3^2$ in the expression above and since \eqref{eq: prelim m2m3} is even in $m_1$ we do not need to take the square root or choose the sign.

Furthermore, once we have substituted $m_1^2=C-m_2^2-m_3^2$ in the expression above, only $m_2$ and $m_3$ appear and effectively we have projected $\pi(\overline{\Sigma}_{C,h})$ to the  $(m_2,m_3)$- plane. The image of this projection will then be contained inside the disk $m_2^2 + m_3^2\le C$. The expression (\ref{eq: prelim m2m3}) turns into 
\[
2h-\frac{C}{2}  -\frac{m_3^2}{2}+ \frac{1}{2m_3^2} + \frac{m_2}{m_3}\ge 0.
\]
The image of $\pi(\overline{\Sigma}_{C,h})$ projected to the $(m_2, m_3)$-plane is clearly the intersection of this region with the disk $m_2^2+m_3^2\le C$. 

The left hand side of  (\ref{eq:full squares}) is even as a function of $m_1$; this means that with the projection to the $(m_2,m_3)$-plane there are no overlaps and no holes are lost; thus, we can deduce the number of holes in $\pi(\overline{\Sigma}_{C,h})$ from the system of equations 
\begin{equation}
\label{eq: sys m2m3}
\begin{cases}
m_2^2 + m_3^2 = C;\\
2h-\frac{C}{2}  -\frac{m_3^2}{2}+ \frac{1}{2m_3^2} + \frac{m_2}{m_3}= 0
\end{cases}
\end{equation}
Expressing $\frac{m_2^2}{m_3^2} = C-1$, substituting it into the second equation, squaring and subsequently multiplying by $4m_3^4$  yield an 8th degree polynomial  in $m_3$:
\begin{equation}
    \label{eq: bif}
\frac{m_3^8}{4}+ \left(\frac{C}{2}-2 h\right)m_3^6 + \left(\frac{C^2}{4}-2 C h+4 h^2+\frac{1}{2}\right)m_3^4 + \left(2 h-\frac{3 C}{2}\right)m_3^2 +\frac{1}{4}=0
\end{equation}

This can be viewed as a  polynomial in $m_3^2$ and thus the corresponding equation can only have an even number of solutions (zero can't be among them). 

To the same end, one can observe that the system (\ref{eq: sys m2m3})  is symmetric with respect to the transformation $(m_2,m_3)\mapsto(-m_2,-m_3)$ and therefore has an even number of solutions, each of which corresponds to an intersection of two curves. Therefore, the number of holes can be 0, 2 or 4, depending on the values of $h$ and $C$. All of these cases take place for varying values of $h$ and $C$ and are depicted  in Figure \ref{fig: sigmac}. 

\begin{figure}
    \centering
\subfigure{

\tikzset{every picture/.style={line width=0.75pt}} 

\begin{tikzpicture}[x=0.75pt,y=0.75pt,yscale=-.7,xscale=.7]

\draw   (470.46,190.02) .. controls (457.7,200.34) and (440.82,206.62) .. (422.31,206.62) .. controls (382.52,206.62) and (350.26,177.6) .. (350.26,141.81) .. controls (350.26,106.01) and (382.52,76.99) .. (422.31,76.99) .. controls (443.61,76.99) and (462.75,85.31) .. (475.94,98.53) -- (458.05,112.96) .. controls (449.09,104.88) and (436.39,99.83) .. (422.31,99.83) .. controls (395.13,99.83) and (373.1,118.62) .. (373.1,141.81) .. controls (373.1,164.99) and (395.13,183.78) .. (422.31,183.78) .. controls (434.48,183.78) and (445.63,180.01) .. (454.22,173.76) -- cycle ;
\draw [-to](20,90) -- (57,110);

\draw (0,70) node [anchor=north west][inner sep=0.75pt]  [font=\normalsize,rotate=0,xslant=-0.03]  {$K$};
\draw   (460.31,228.99) .. controls (439,225.75) and (421.06,210.65) .. (416.34,189.6) .. controls (413.24,175.76) and (416.44,161.98) .. (424.11,150.91) -- (444.61,163.31) .. controls (439.91,169.41) and (437.84,177.08) .. (439.56,184.75) .. controls (442.13,196.2) and (452.39,204.29) .. (464.61,205.75) -- cycle ;
\draw   (424.88,133.3) .. controls (415.42,114.79) and (417.72,92.15) .. (432.54,76.28) .. controls (442.18,65.96) and (455.33,60.43) .. (468.86,59.81) -- (470.77,83.39) .. controls (463.25,83.52) and (455.95,86.43) .. (450.68,92.08) .. controls (442.66,100.66) and (441.64,113.09) .. (447.1,123.27) -- cycle ;
\draw  [color={rgb, 255:red, 255; green, 255; blue, 255 }  ,draw opacity=1 ][line width=3] [line join = round][line cap = round] (419.1,87.92) .. controls (419.18,88.41) and (419.27,88.91) .. (419.35,89.4) ;
\draw  [color={rgb, 255:red, 255; green, 255; blue, 255 }  ,draw opacity=1 ][line width=3] [line join = round][line cap = round] (426.48,182.53) .. controls (424.48,184.46) and (420.81,184.48) .. (418.08,183.76) ;
\draw  [color={rgb, 255:red, 255; green, 255; blue, 255 }  ,draw opacity=1 ][line width=3] [line join = round][line cap = round] (423.43,183.26) .. controls (428.18,183.02) and (432.94,182.77) .. (437.69,182.53) ;
\draw  [color={rgb, 255:red, 255; green, 255; blue, 255 }  ,draw opacity=1 ][line width=3] [line join = round][line cap = round] (437.44,180.56) .. controls (434.86,187.24) and (430.05,193.59) .. (430.05,200.73) ;
\draw  [color={rgb, 255:red, 255; green, 255; blue, 255 }  ,draw opacity=1 ][line width=3] [line join = round][line cap = round] (430.05,200.97) .. controls (429.71,203.1) and (430.56,205.79) .. (429.03,207.37) .. controls (428.16,208.26) and (426.99,205.89) .. (425.97,205.15) ;
\draw  [color={rgb, 255:red, 255; green, 255; blue, 255 }  ,draw opacity=1 ][line width=3] [line join = round][line cap = round] (433.62,205.64) .. controls (435.48,205.48) and (439.91,203.47) .. (439.22,205.15) .. controls (438.33,207.29) and (429.78,206.23) .. (432.09,205.64) .. controls (437.75,204.19) and (443.63,203.68) .. (449.41,202.69) ;
\draw  [color={rgb, 255:red, 255; green, 255; blue, 255 }  ,draw opacity=1 ][line width=3] [line join = round][line cap = round] (415.79,185.97) .. controls (417.82,191.87) and (419.86,197.77) .. (421.9,203.68) ;
\draw  [color={rgb, 255:red, 255; green, 255; blue, 255 }  ,draw opacity=1 ][line width=3] [line join = round][line cap = round] (415.28,186.95) .. controls (415.87,188.59) and (416.47,190.23) .. (417.06,191.87) ;
\draw  [color={rgb, 255:red, 255; green, 255; blue, 255 }  ,draw opacity=1 ][line width=3] [line join = round][line cap = round] (440.49,183.51) .. controls (441.76,186.71) and (444.51,189.68) .. (444.31,193.1) .. controls (444.19,195.26) and (438.8,186.27) .. (440.24,187.94) .. controls (443.37,191.57) and (446.35,195.31) .. (449.41,199) ;
\draw  [color={rgb, 255:red, 255; green, 255; blue, 255 }  ,draw opacity=1 ][line width=3] [line join = round][line cap = round] (422.15,93.01) .. controls (424.45,89.56) and (427.87,86.61) .. (429.03,82.68) .. controls (429.88,79.82) and (421.54,92.08) .. (423.17,89.56) .. controls (425.23,86.39) and (427.93,83.66) .. (430.3,80.71) ;
\draw  [color={rgb, 255:red, 255; green, 255; blue, 255 }  ,draw opacity=1 ][line width=3] [line join = round][line cap = round] (438.2,77.27) .. controls (427.64,82.36) and (432.65,78.89) .. (455.26,84.15) ;
\draw  [color={rgb, 255:red, 255; green, 255; blue, 255 }  ,draw opacity=1 ][line width=3] [line join = round][line cap = round] (455.01,84.15) .. controls (449.66,82.51) and (443.94,81.73) .. (438.96,79.23) .. controls (435.18,77.33) and (466.02,79.63) .. (441.76,80.46) ;
\draw  [color={rgb, 255:red, 255; green, 255; blue, 255 }  ,draw opacity=1 ][line width=3] [line join = round][line cap = round] (455.52,80.71) .. controls (456.2,81.86) and (456.88,83) .. (457.56,84.15) ;
\draw  [color={rgb, 255:red, 255; green, 255; blue, 255 }  ,draw opacity=1 ][line width=3] [line join = round][line cap = round] (447.88,96.45) .. controls (447.11,98.09) and (444.35,102.71) .. (445.59,101.37) .. controls (448.15,98.58) and (450.37,95.42) .. (451.95,92.02) .. controls (452.47,90.9) and (449,95.46) .. (448.9,94.24) .. controls (448.38,88.28) and (453.47,88.74) .. (457.05,88.09) ;
\draw  [color={rgb, 255:red, 255; green, 255; blue, 255 }  ,draw opacity=1 ][line width=3] [line join = round][line cap = round] (425.97,92.51) .. controls (424.36,93.74) and (422.75,94.97) .. (421.14,96.2) ;
\draw  [color={rgb, 255:red, 255; green, 255; blue, 255 }  ,draw opacity=1 ][line width=3] [line join = round][line cap = round] (425.72,101.12) .. controls (425.32,100.47) and (424.13,100.29) .. (423.43,100.63) ;
\draw  [color={rgb, 255:red, 255; green, 255; blue, 255 }  ,draw opacity=1 ][line width=3] [line join = round][line cap = round] (423.68,100.63) .. controls (429.37,101.45) and (435.06,102.27) .. (440.75,103.09) ;
\draw  [color={rgb, 255:red, 255; green, 255; blue, 255 }  ,draw opacity=1 ][line width=3] [line join = round][line cap = round] (440.75,103.09) .. controls (441.09,103.09) and (441.43,103.09) .. (441.76,103.09) ;
\draw  [color={rgb, 255:red, 255; green, 255; blue, 255 }  ,draw opacity=1 ][line width=1.5] [line join = round][line cap = round] (423.68,100.14) .. controls (423.43,98.91) and (423.89,97.28) .. (422.92,96.45) .. controls (422.33,95.95) and (421.69,98.46) .. (421.14,97.92) ;
\draw  [color={rgb, 255:red, 255; green, 255; blue, 255 }  ,draw opacity=1 ][line width=1.5] [line join = round][line cap = round] (420.37,98.42) .. controls (423.07,98.42) and (422.56,98.42) .. (419.86,98.42) ;
\draw  [color={rgb, 255:red, 255; green, 255; blue, 255 }  ,draw opacity=1 ][line width=1.5] [line join = round][line cap = round] (422.66,98.42) .. controls (422.41,98.74) and (422.15,99.07) .. (421.9,99.4) ;
\draw  [color={rgb, 255:red, 255; green, 255; blue, 255 }  ,draw opacity=1 ][line width=1.5] [line join = round][line cap = round] (421.39,101.12) .. controls (421.65,100.22) and (421.9,99.32) .. (422.15,98.42) ;
\draw  [color={rgb, 255:red, 255; green, 255; blue, 255 }  ,draw opacity=1 ][line width=1.5] [line join = round][line cap = round] (444.06,102.35) .. controls (443.29,103.25) and (442.53,104.15) .. (441.76,105.06) ;
\draw  [color={rgb, 255:red, 255; green, 255; blue, 255 }  ,draw opacity=1 ][line width=1.5] [line join = round][line cap = round] (447.88,103.83) .. controls (446.6,103.5) and (445.33,103.17) .. (444.06,102.84) ;
\draw  [color={rgb, 255:red, 255; green, 255; blue, 255 }  ,draw opacity=1 ][line width=1.5] [line join = round][line cap = round] (444.31,102.84) .. controls (443.8,103.25) and (443.29,103.66) .. (442.78,104.07) ;
\draw  [color={rgb, 255:red, 255; green, 255; blue, 255 }  ,draw opacity=1 ][line width=1.5] [line join = round][line cap = round] (429.03,79.48) .. controls (431.42,79.48) and (431.73,79.64) .. (430.05,79.23) ;
\draw  [color={rgb, 255:red, 255; green, 255; blue, 255 }  ,draw opacity=1 ][line width=1.5] [line join = round][line cap = round] (432.09,79.23) .. controls (431.24,79.23) and (430.39,79.23) .. (429.54,79.23) ;
\draw  [color={rgb, 255:red, 255; green, 255; blue, 255 }  ,draw opacity=1 ][line width=1.5] [line join = round][line cap = round] (429.54,79.23) .. controls (429.54,79.23) and (429.54,79.23) .. (429.54,79.23) ;
\draw  [color={rgb, 255:red, 255; green, 255; blue, 255 }  ,draw opacity=1 ][line width=1.5] [line join = round][line cap = round] (437.35,77.43) .. controls (436.16,77.43) and (434.97,77.43) .. (433.78,77.43) ;
\draw  [color={rgb, 255:red, 255; green, 255; blue, 255 }  ,draw opacity=1 ][line width=1.5] [line join = round][line cap = round] (433.28,78.41) .. controls (433.19,78.25) and (433.11,78.09) .. (433.02,77.92) ;
\draw  [color={rgb, 255:red, 255; green, 255; blue, 255 }  ,draw opacity=1 ][line width=1.5] [line join = round][line cap = round] (430.47,80.14) .. controls (430.56,79.73) and (430.64,79.32) .. (430.73,78.91) ;
\draw  [color={rgb, 255:red, 255; green, 255; blue, 255 }  ,draw opacity=1 ][line width=1.5] [line join = round][line cap = round] (456.71,85.55) .. controls (457.13,86.04) and (457.56,86.53) .. (457.98,87.02) ;
\draw  [color={rgb, 255:red, 255; green, 255; blue, 255 }  ,draw opacity=1 ][line width=1.5] [line join = round][line cap = round] (414.68,184.66) .. controls (415.45,184.66) and (416.21,184.66) .. (416.98,184.66) ;
\draw  [color={rgb, 255:red, 255; green, 255; blue, 255 }  ,draw opacity=1 ][line width=1.5] [line join = round][line cap = round] (421.56,204.33) .. controls (422.32,204.66) and (423.09,204.99) .. (423.85,205.32) ;
\draw  [color={rgb, 255:red, 255; green, 255; blue, 255 }  ,draw opacity=1 ][line width=1.5] [line join = round][line cap = round] (426.14,206.3) .. controls (425.97,206.14) and (425.8,205.97) .. (425.64,205.81) ;
\draw  [color={rgb, 255:red, 255; green, 255; blue, 255 }  ,draw opacity=1 ][line width=1.5] [line join = round][line cap = round] (425.64,206.05) .. controls (425.64,206.22) and (425.64,206.38) .. (425.64,206.55) ;
\draw  [color={rgb, 255:red, 255; green, 255; blue, 255 }  ,draw opacity=1 ][line width=1.5] [line join = round][line cap = round] (448.3,199.17) .. controls (449.07,199.41) and (449.83,199.66) .. (450.59,199.91) ;
\draw  [color={rgb, 255:red, 255; green, 255; blue, 255 }  ,draw opacity=1 ][line width=1.5] [line join = round][line cap = round] (449.58,201.13) .. controls (450.08,201.38) and (450.59,201.63) .. (451.1,201.87) ;
\draw  [color={rgb, 255:red, 255; green, 255; blue, 255 }  ,draw opacity=1 ][line width=1.5] [line join = round][line cap = round] (438.62,181.71) .. controls (439.73,181.71) and (440.83,181.71) .. (441.93,181.71) ;
\draw [color={rgb, 255:red, 255; green, 255; blue, 255 }  ,draw opacity=1 ] [dash pattern={on 0.84pt off 2.51pt}]  (423.64,151.6) -- (444.65,163.86) ;
\draw [color={rgb, 255:red, 255; green, 255; blue, 255 }  ,draw opacity=1 ] [dash pattern={on 0.84pt off 2.51pt}]  (424.85,133.24) -- (447.1,123.23) ;
\draw [color={rgb, 255:red, 255; green, 255; blue, 255 }  ,draw opacity=1 ] [dash pattern={on 0.84pt off 2.51pt}]  (454.62,173.05) -- (471.38,189.26) ;
\draw [color={rgb, 255:red, 255; green, 255; blue, 255 }  ,draw opacity=1 ] [dash pattern={on 0.84pt off 2.51pt}]  (464.3,205.61) -- (459.72,228.9) ;
\draw [color={rgb, 255:red, 255; green, 255; blue, 255 }  ,draw opacity=1 ] [dash pattern={on 0.84pt off 2.51pt}]  (458.31,113.76) -- (476.77,99.37) ;
\draw [color={rgb, 255:red, 255; green, 255; blue, 255 }  ,draw opacity=1 ] [dash pattern={on 0.84pt off 2.51pt}]  (470.71,83.44) -- (468.75,59.81) ;

\draw  [draw opacity=0] (106.59,191.51) .. controls (100.79,193.55) and (94.54,194.67) .. (88.02,194.67) .. controls (57.63,194.67) and (33,170.42) .. (33,140.5) .. controls (33,110.58) and (57.63,86.33) .. (88.02,86.33) .. controls (94.54,86.33) and (100.79,87.45) .. (106.59,89.49) -- (88.02,140.5) -- cycle ; \draw   (106.59,191.51) .. controls (100.79,193.55) and (94.54,194.67) .. (88.02,194.67) .. controls (57.63,194.67) and (33,170.42) .. (33,140.5) .. controls (33,110.58) and (57.63,86.33) .. (88.02,86.33) .. controls (94.54,86.33) and (100.79,87.45) .. (106.59,89.49) ;  
\draw  [draw opacity=0] (159.2,89.49) .. controls (165,87.45) and (171.25,86.33) .. (177.76,86.33) .. controls (208.15,86.33) and (232.79,110.58) .. (232.79,140.5) .. controls (232.79,170.42) and (208.15,194.67) .. (177.76,194.67) .. controls (171.25,194.67) and (165,193.55) .. (159.2,191.51) -- (177.76,140.5) -- cycle ; \draw   (159.2,89.49) .. controls (165,87.45) and (171.25,86.33) .. (177.76,86.33) .. controls (208.15,86.33) and (232.79,110.58) .. (232.79,140.5) .. controls (232.79,170.42) and (208.15,194.67) .. (177.76,194.67) .. controls (171.25,194.67) and (165,193.55) .. (159.2,191.51) ;  
\draw    (105.81,191.77) .. controls (133.43,179.35) and (152.66,188.32) .. (159.97,191.77) ;
\draw    (105.81,89.23) .. controls (125.95,96.42) and (137.7,100.9) .. (159.97,89.23) ;
\draw   (100.72,175.4) .. controls (96.76,176.87) and (92.49,177.67) .. (88.02,177.67) .. controls (67.66,177.67) and (51.16,161.03) .. (51.16,140.5) .. controls (51.16,119.97) and (67.66,103.33) .. (88.02,103.33) .. controls (92.49,103.33) and (96.76,104.13) .. (100.72,105.6) -- cycle ;
\draw   (165.06,105.6) .. controls (169.02,104.13) and (173.3,103.33) .. (177.76,103.33) .. controls (198.12,103.33) and (214.62,119.97) .. (214.62,140.5) .. controls (214.62,161.03) and (198.12,177.67) .. (177.76,177.67) .. controls (173.3,177.67) and (169.02,176.87) .. (165.06,175.4) -- cycle ;
\draw  [draw opacity=0] (132.76,176.68) .. controls (121.67,173.06) and (113.4,158.94) .. (113.4,142.07) .. controls (113.4,124.67) and (122.19,110.2) .. (133.8,107.15) -- (138.37,142.07) -- cycle ; \draw   (132.76,176.68) .. controls (121.67,173.06) and (113.4,158.94) .. (113.4,142.07) .. controls (113.4,124.67) and (122.19,110.2) .. (133.8,107.15) ;  
\draw  [draw opacity=0] (133.13,107.28) .. controls (144.31,110.8) and (152.68,124.99) .. (152.68,141.95) .. controls (152.68,158.93) and (144.29,173.13) .. (133.08,176.63) -- (127.7,141.95) -- cycle ; \draw   (133.13,107.28) .. controls (144.31,110.8) and (152.68,124.99) .. (152.68,141.95) .. controls (152.68,158.93) and (144.29,173.13) .. (133.08,176.63) ;  
\draw [line width=1.5]  [dash pattern={on 1.69pt off 2.76pt}]  (88.02,140.5) -- (126.33,140.42) ;
\draw [line width=1.5]  [dash pattern={on 1.69pt off 2.76pt}]  (142.83,140.17) -- (179.33,139.92) ;
\draw [line width=1.5]  [dash pattern={on 1.69pt off 2.76pt}]  (206.25,140.17) -- (242.75,140.17) ;
\draw    (267,141.33) -- (321,141.33) ;
\draw [shift={(323,141.33)}, rotate = 180] [color={rgb, 255:red, 0; green, 0; blue, 0 }  ][line width=0.75]    (10.93,-3.29) .. controls (6.95,-1.4) and (3.31,-0.3) .. (0,0) .. controls (3.31,0.3) and (6.95,1.4) .. (10.93,3.29)   ;

\end{tikzpicture}
}
\subfigure{

\tikzset{every picture/.style={line width=0.75pt}} 

\begin{tikzpicture}[x=0.75pt,y=0.75pt,yscale=-.7,xscale=.7]

\draw    (270,150.33) -- (324,150.33) ;
\draw [shift={(326,150.33)}, rotate = 180] [color={rgb, 255:red, 0; green, 0; blue, 0 }  ][line width=0.75]    (10.93,-3.29) .. controls (6.95,-1.4) and (3.31,-0.3) .. (0,0) .. controls (3.31,0.3) and (6.95,1.4) .. (10.93,3.29)   ;
\draw   (355.35,147.22) .. controls (355.35,102.07) and (392.62,65.47) .. (438.6,65.47) .. controls (484.57,65.47) and (521.84,102.07) .. (521.84,147.22) .. controls (521.84,192.37) and (484.57,228.97) .. (438.6,228.97) .. controls (392.62,228.97) and (355.35,192.37) .. (355.35,147.22) -- cycle ;
\draw  [color={rgb, 255:red, 255; green, 255; blue, 255 }  ,draw opacity=1 ][line width=3] [line join = round][line cap = round] (428.73,150.72) .. controls (429.04,150.72) and (429.35,150.72) .. (429.66,150.72) ;
\draw  [color={rgb, 255:red, 255; green, 255; blue, 255 }  ,draw opacity=1 ][line width=3] [line join = round][line cap = round] (433.07,66.28) .. controls (438.22,65.84) and (443.94,63.4) .. (448.57,65.67) ;
\draw  [color={rgb, 255:red, 255; green, 255; blue, 255 }  ,draw opacity=1 ][line width=3] [line join = round][line cap = round] (433.07,228.26) .. controls (436.27,228.67) and (439.45,229.48) .. (442.68,229.48) .. controls (445.52,229.48) and (453.74,228.84) .. (451.05,227.96) ;
\draw  [color={rgb, 255:red, 255; green, 255; blue, 255 }  ,draw opacity=1 ][line width=3] [line join = round][line cap = round] (365.48,186.25) .. controls (368.67,191.16) and (369.47,198.56) .. (374.78,201.16) ;
\draw  [color={rgb, 255:red, 255; green, 255; blue, 255 }  ,draw opacity=1 ][line width=3] [line join = round][line cap = round] (372.3,195.38) .. controls (373.02,196.6) and (373.75,197.82) .. (374.47,199.03) ;
\draw  [color={rgb, 255:red, 255; green, 255; blue, 255 }  ,draw opacity=1 ][line width=3] [line join = round][line cap = round] (505.31,97.64) .. controls (508.51,103.02) and (511.72,108.4) .. (514.92,113.78) ;
\draw  [color={rgb, 255:red, 255; green, 255; blue, 255 }  ,draw opacity=1 ][line width=3] [line join = round][line cap = round] (367.96,102.82) .. controls (367.96,108.72) and (360.52,111.38) .. (360.52,117.74) .. controls (360.52,120.46) and (362.85,112.79) .. (364.24,110.43) .. controls (364.69,109.66) and (368.85,104.21) .. (369.2,104.04) ;
\draw  [color={rgb, 255:red, 255; green, 255; blue, 255 }  ,draw opacity=1 ][line width=3] [line join = round][line cap = round] (516.16,179.95) .. controls (516.16,174.07) and (519.09,168.54) .. (520.81,162.9) ;
\draw  [draw opacity=0][dash pattern={on 0.84pt off 2.51pt}] (365.04,184.26) .. controls (369.05,184.32) and (372.97,186.16) .. (375.48,189.59) .. controls (378.13,193.2) and (378.59,197.7) .. (377.12,201.6) -- (364.76,197.18) -- cycle ; \draw  [dash pattern={on 0.84pt off 2.51pt}] (365.04,184.26) .. controls (369.05,184.32) and (372.97,186.16) .. (375.48,189.59) .. controls (378.13,193.2) and (378.59,197.7) .. (377.12,201.6) ;  
\draw  [draw opacity=0][dash pattern={on 0.84pt off 2.51pt}] (369.47,101.52) .. controls (371.01,104.93) and (371.08,109.01) .. (369.31,112.71) .. controls (367.54,116.42) and (364.29,118.97) .. (360.63,119.98) -- (357.39,107.21) -- cycle ; \draw  [dash pattern={on 0.84pt off 2.51pt}] (369.47,101.52) .. controls (371.01,104.93) and (371.08,109.01) .. (369.31,112.71) .. controls (367.54,116.42) and (364.29,118.97) .. (360.63,119.98) ;  
\draw  [draw opacity=0][dash pattern={on 0.84pt off 2.51pt}] (450.64,66.1) .. controls (448.52,69.26) and (445.04,71.33) .. (441.09,71.37) .. controls (436.94,71.4) and (433.27,69.18) .. (431.13,65.79) -- (440.97,58.92) -- cycle ; \draw  [dash pattern={on 0.84pt off 2.51pt}] (450.64,66.1) .. controls (448.52,69.26) and (445.04,71.33) .. (441.09,71.37) .. controls (436.94,71.4) and (433.27,69.18) .. (431.13,65.79) ;  
\draw  [draw opacity=0][dash pattern={on 0.84pt off 2.51pt}] (513.84,115.63) .. controls (510.27,114.37) and (507,111.58) .. (505,107.65) .. controls (502.89,103.49) and (502.67,98.97) .. (504.05,95.28) -- (516.36,102.09) -- cycle ; \draw  [dash pattern={on 0.84pt off 2.51pt}] (513.84,115.63) .. controls (510.27,114.37) and (507,111.58) .. (505,107.65) .. controls (502.89,103.49) and (502.67,98.97) .. (504.05,95.28) ;  
\draw  [draw opacity=0][dash pattern={on 0.84pt off 2.51pt}] (514.14,179.17) .. controls (511.94,176.22) and (511.16,172.36) .. (512.4,168.7) .. controls (513.71,164.85) and (516.93,162.13) .. (520.73,161.19) -- (523.94,172.47) -- cycle ; \draw  [dash pattern={on 0.84pt off 2.51pt}] (514.14,179.17) .. controls (511.94,176.22) and (511.16,172.36) .. (512.4,168.7) .. controls (513.71,164.85) and (516.93,162.13) .. (520.73,161.19) ;  
\draw  [draw opacity=0][dash pattern={on 0.84pt off 2.51pt}] (431.07,227.88) .. controls (433.18,224.21) and (437.17,221.64) .. (441.85,221.43) .. controls (447.14,221.19) and (451.83,224.04) .. (454.05,228.31) -- (442.43,233.86) -- cycle ; \draw  [dash pattern={on 0.84pt off 2.51pt}] (431.07,227.88) .. controls (433.18,224.21) and (437.17,221.64) .. (441.85,221.43) .. controls (447.14,221.19) and (451.83,224.04) .. (454.05,228.31) ;  

\draw   (72.35,148.22) .. controls (72.35,103.07) and (109.62,66.47) .. (155.6,66.47) .. controls (201.57,66.47) and (238.84,103.07) .. (238.84,148.22) .. controls (238.84,193.37) and (201.57,229.97) .. (155.6,229.97) .. controls (109.62,229.97) and (72.35,193.37) .. (72.35,148.22) -- cycle ;
\draw  [color={rgb, 255:red, 255; green, 255; blue, 255 }  ,draw opacity=1 ][line width=3] [line join = round][line cap = round] (145.73,151.72) .. controls (146.04,151.72) and (146.35,151.72) .. (146.66,151.72) ;
\draw  [color={rgb, 255:red, 255; green, 255; blue, 255 }  ,draw opacity=1 ][line width=3] [line join = round][line cap = round] (150.07,67.28) .. controls (155.22,66.84) and (160.94,64.4) .. (165.57,66.67) ;
\draw  [color={rgb, 255:red, 255; green, 255; blue, 255 }  ,draw opacity=1 ][line width=3] [line join = round][line cap = round] (150.07,229.26) .. controls (153.27,229.67) and (156.45,230.48) .. (159.68,230.48) .. controls (162.52,230.48) and (170.74,229.84) .. (168.05,228.96) ;
\draw  [color={rgb, 255:red, 255; green, 255; blue, 255 }  ,draw opacity=1 ][line width=3] [line join = round][line cap = round] (82.48,187.25) .. controls (85.67,192.16) and (86.47,199.56) .. (91.78,202.16) ;
\draw  [color={rgb, 255:red, 255; green, 255; blue, 255 }  ,draw opacity=1 ][line width=3] [line join = round][line cap = round] (89.3,196.38) .. controls (90.02,197.6) and (90.75,198.82) .. (91.47,200.03) ;
\draw  [color={rgb, 255:red, 255; green, 255; blue, 255 }  ,draw opacity=1 ][line width=3] [line join = round][line cap = round] (222.31,98.64) .. controls (225.51,104.02) and (228.72,109.4) .. (231.92,114.78) ;
\draw  [color={rgb, 255:red, 255; green, 255; blue, 255 }  ,draw opacity=1 ][line width=3] [line join = round][line cap = round] (84.96,103.82) .. controls (84.96,109.72) and (77.52,112.38) .. (77.52,118.74) .. controls (77.52,121.46) and (79.85,113.79) .. (81.24,111.43) .. controls (81.69,110.66) and (85.85,105.21) .. (86.2,105.04) ;
\draw  [color={rgb, 255:red, 255; green, 255; blue, 255 }  ,draw opacity=1 ][line width=3] [line join = round][line cap = round] (233.16,180.95) .. controls (233.16,175.07) and (236.09,169.54) .. (237.81,163.9) ;
\draw  [draw opacity=0][dash pattern={on 0.84pt off 2.51pt}] (82.04,185.26) .. controls (86.05,185.32) and (89.97,187.16) .. (92.48,190.59) .. controls (95.13,194.2) and (95.59,198.7) .. (94.12,202.6) -- (81.76,198.18) -- cycle ; \draw  [dash pattern={on 0.84pt off 2.51pt}] (82.04,185.26) .. controls (86.05,185.32) and (89.97,187.16) .. (92.48,190.59) .. controls (95.13,194.2) and (95.59,198.7) .. (94.12,202.6) ;  
\draw  [draw opacity=0][dash pattern={on 0.84pt off 2.51pt}] (86.47,102.52) .. controls (88.01,105.93) and (88.08,110.01) .. (86.31,113.71) .. controls (84.54,117.42) and (81.29,119.97) .. (77.63,120.98) -- (74.39,108.21) -- cycle ; \draw  [dash pattern={on 0.84pt off 2.51pt}] (86.47,102.52) .. controls (88.01,105.93) and (88.08,110.01) .. (86.31,113.71) .. controls (84.54,117.42) and (81.29,119.97) .. (77.63,120.98) ;  
\draw  [draw opacity=0][dash pattern={on 0.84pt off 2.51pt}] (167.64,67.1) .. controls (165.52,70.26) and (162.04,72.33) .. (158.09,72.37) .. controls (153.94,72.4) and (150.27,70.18) .. (148.13,66.79) -- (157.97,59.92) -- cycle ; \draw  [dash pattern={on 0.84pt off 2.51pt}] (167.64,67.1) .. controls (165.52,70.26) and (162.04,72.33) .. (158.09,72.37) .. controls (153.94,72.4) and (150.27,70.18) .. (148.13,66.79) ;  
\draw  [draw opacity=0][dash pattern={on 0.84pt off 2.51pt}] (230.84,116.63) .. controls (227.27,115.37) and (224,112.58) .. (222,108.65) .. controls (219.89,104.49) and (219.67,99.97) .. (221.05,96.28) -- (233.36,103.09) -- cycle ; \draw  [dash pattern={on 0.84pt off 2.51pt}] (230.84,116.63) .. controls (227.27,115.37) and (224,112.58) .. (222,108.65) .. controls (219.89,104.49) and (219.67,99.97) .. (221.05,96.28) ;  
\draw  [draw opacity=0][dash pattern={on 0.84pt off 2.51pt}] (231.14,180.17) .. controls (228.94,177.22) and (228.16,173.36) .. (229.4,169.7) .. controls (230.71,165.85) and (233.93,163.13) .. (237.73,162.19) -- (240.94,173.47) -- cycle ; \draw  [dash pattern={on 0.84pt off 2.51pt}] (231.14,180.17) .. controls (228.94,177.22) and (228.16,173.36) .. (229.4,169.7) .. controls (230.71,165.85) and (233.93,163.13) .. (237.73,162.19) ;  
\draw  [draw opacity=0][dash pattern={on 0.84pt off 2.51pt}] (148.07,228.88) .. controls (150.18,225.21) and (154.17,222.64) .. (158.85,222.43) .. controls (164.14,222.19) and (168.83,225.04) .. (171.05,229.31) -- (159.43,234.86) -- cycle ; \draw  [dash pattern={on 0.84pt off 2.51pt}] (148.07,228.88) .. controls (150.18,225.21) and (154.17,222.64) .. (158.85,222.43) .. controls (164.14,222.19) and (168.83,225.04) .. (171.05,229.31) ;  
\draw  [draw opacity=0][dash pattern={on 0.84pt off 2.51pt}] (75.18,95.63) .. controls (79.91,100.92) and (81.45,107.28) .. (78.51,111.64) .. controls (75.78,115.67) and (69.91,116.9) .. (63.65,115.3) -- (62.76,101.41) -- cycle ; \draw  [dash pattern={on 0.84pt off 2.51pt}] (75.18,95.63) .. controls (79.91,100.92) and (81.45,107.28) .. (78.51,111.64) .. controls (75.78,115.67) and (69.91,116.9) .. (63.65,115.3) ;  
\draw    (75.34,95.82) -- (63.7,115.31) ;

\draw  [draw opacity=0][dash pattern={on 0.84pt off 2.51pt}] (170.63,55.91) .. controls (167.96,62.36) and (162.91,66.54) .. (157.67,65.94) .. controls (152.81,65.38) and (149.06,60.89) .. (147.64,54.76) -- (159.81,47.52) -- cycle ; \draw  [dash pattern={on 0.84pt off 2.51pt}] (170.63,55.91) .. controls (167.96,62.36) and (162.91,66.54) .. (157.67,65.94) .. controls (152.81,65.38) and (149.06,60.89) .. (147.64,54.76) ;  
\draw    (170.59,55.99) -- (147.62,54.64) ;

\draw  [draw opacity=0][dash pattern={on 0.84pt off 2.51pt}] (244.01,110.31) .. controls (237.01,111.87) and (230.57,110.16) .. (228.08,105.53) .. controls (225.78,101.24) and (227.49,95.59) .. (231.94,90.95) -- (244.76,96.85) -- cycle ; \draw  [dash pattern={on 0.84pt off 2.51pt}] (244.01,110.31) .. controls (237.01,111.87) and (230.57,110.16) .. (228.08,105.53) .. controls (225.78,101.24) and (227.49,95.59) .. (231.94,90.95) ;  
\draw    (243.75,110.37) -- (231.88,91) ;

\draw  [draw opacity=0][dash pattern={on 0.84pt off 2.51pt}] (245.61,186.77) .. controls (239.33,183.33) and (235.68,177.82) .. (236.99,172.74) .. controls (238.2,168.03) and (243.34,164.95) .. (249.82,164.42) -- (255.3,177.23) -- cycle ; \draw  [dash pattern={on 0.84pt off 2.51pt}] (245.61,186.77) .. controls (239.33,183.33) and (235.68,177.82) .. (236.99,172.74) .. controls (238.2,168.03) and (243.34,164.95) .. (249.82,164.42) ;  
\draw    (245.34,186.62) -- (249.7,164.43) ;

\draw  [draw opacity=0][dash pattern={on 0.84pt off 2.51pt}] (69.79,192.42) .. controls (76.45,189.84) and (83.07,190.55) .. (86.25,194.74) .. controls (89.19,198.62) and (88.39,204.45) .. (84.75,209.7) -- (71.13,205.85) -- cycle ; \draw  [dash pattern={on 0.84pt off 2.51pt}] (69.79,192.42) .. controls (76.45,189.84) and (83.07,190.55) .. (86.25,194.74) .. controls (89.19,198.62) and (88.39,204.45) .. (84.75,209.7) ;  
\draw    (70.01,192.33) -- (84.77,209.67) ;

\draw  [draw opacity=0][dash pattern={on 0.84pt off 2.51pt}] (147.82,240.82) .. controls (150.04,234.2) and (154.79,229.71) .. (160.06,229.96) .. controls (164.94,230.19) and (168.99,234.43) .. (170.83,240.44) -- (159.19,248.47) -- cycle ; \draw  [dash pattern={on 0.84pt off 2.51pt}] (147.82,240.82) .. controls (150.04,234.2) and (154.79,229.71) .. (160.06,229.96) .. controls (164.94,230.19) and (168.99,234.43) .. (170.83,240.44) ;  
\draw    (147.85,240.73) -- (170.86,240.57) ;

\draw (430.05,40.01) node [anchor=north west][inner sep=0.75pt]  [font=\Huge,rotate=-271.67,xslant=-0.03]  {$\mathcal{\simeq }$};

\end{tikzpicture}

}
    \caption{Transforming $K$ into a disk with cut out lunettes in the case when $\pi(\overline{\Sigma}_{C,h})$ is a disk with three holes. }
    \label{fig: we cut out the BALLS}
\end{figure}
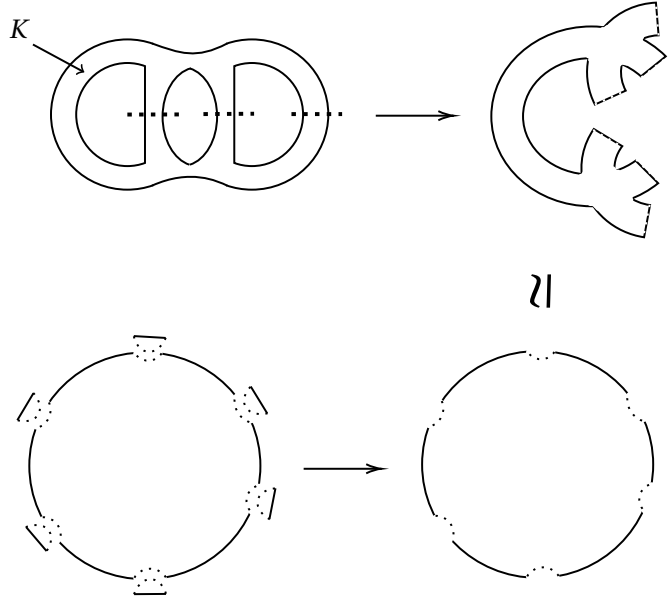
\end{proof}

For a discussion of the bifurcation diagram pertaining to the appearance of these holes in the image of $\pi$ see Subsection \ref{bif.sub1}. There we will also show that this bifurcation diagram coincides with the bifurcation diagram describing the existence of relative equilibria in \cite{borisov2018reduction}.

\begin{theorem}
$\overline{\Sigma}_{C,h}$ is homeomorphic to one of the following manifolds:
\begin{enumerate}
    \item $S^1\times S^2$, if $\pi:\overline{\Sigma}_{C,h}\rightarrow S^2_{C}$ is surjective or its image is the sphere minus two open disks.
    \item $\left(S^1\times S^2\right)\#\left(S^1\times S^2\right)\#\left(S^1\times S^2\right)$, if the image of $\pi:\overline{\Sigma}_{C,h}\rightarrow S^2_{C}$  is the sphere minus four open disks, where $M\#N$ denotes the connected sum of manifolds $M$ and $N$.
\end{enumerate}
\end{theorem}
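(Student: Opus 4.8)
The plan is to read off the topology of $\overline{\Sigma}_{C,h}$ from the fibration structure of $\pi$ established in Lemma \ref{lemma:projection}. Over the interior of $\mathrm{Im}\,\pi$ every fibre is a circle (a compactified ellipse, or a compactified parabola along the equator $\{m_3=0\}$), while over $\partial(\mathrm{Im}\,\pi)$ --- where the right-hand side of \eqref{eq:full squares} vanishes --- each fibre degenerates to a single point. Thus $\overline{\Sigma}_{C,h}$ is the total space of a fibrewise-oriented circle fibration over $\mathrm{Im}\,\pi$ whose fibres are crushed to points over each boundary circle, the fibre orientation being the one carried by the conics in the $(\xi,p)$-plane. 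Crushing the fibres over a boundary circle $\Gamma\subset\partial(\mathrm{Im}\,\pi)$ is homeomorphic to gluing the mapping cylinder of $\Gamma\times S^1\to\Gamma$, i.e.\ $\Gamma\times\mathrm{Cone}(S^1)=\Gamma\times D^2$; this is precisely a Dehn filling of the boundary torus $\Gamma\times S^1$ along the fibre slope $\{\mathrm{pt}\}\times S^1$. The whole problem therefore reduces to identifying a trivial circle bundle over $\mathrm{Im}\,\pi$ with some of its boundary tori Dehn-filled along the fibre slope, and the first task is to make this ``crushed fibre $=$ fibre-slope filling'' identification rigorous.

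When $\pi$ is surjective, $\mathrm{Im}\,\pi=S^2_C$ has no boundary, the right-hand side of \eqref{eq:full squares} is positive off the equator (and $+\infty$ on it), so every fibre is a genuine circle and $\overline{\Sigma}_{C,h}$ is an honest fibrewise-oriented circle bundle over $S^2$. Such bundles are classified by their Euler number, the trivial one being $S^1\times S^2$, so here I would prove triviality by exhibiting a global section. Away from the equator each fibre is a bona fide ellipse with centre $(\xi_0,p_0)=\big(\tfrac{m_2m_3+1}{2m_3^2},\tfrac{m_1}{2}\big)$, and I send $(m_1,m_2,m_3)$ to its topmost point in the $p$-direction, namely $\big(\xi_0,\,p_0+\sqrt{(\text{RHS of \eqref{eq:full squares}})/2}\,\big)$, which is continuous and single-valued wherever $m_3\neq0$. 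As $m_3\to0$ both coordinates of this point tend to $+\infty$, so under the inverse stereographic projection of Proposition \ref{prop3.2} it converges to the North pole $(0,0,1)$ --- exactly the point adjoined to close up the parabola over the corresponding equator point. Hence the assignment extends continuously across $\{m_3=0\}$ to a global section, the Euler number is $0$, and $\overline{\Sigma}_{C,h}\cong S^1\times S^2$. Verifying the continuity of this section across the equator is the delicate point of this case.

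For the cases with holes, $\mathrm{int}(\mathrm{Im}\,\pi)$ is an open surface homotopy equivalent to a wedge of circles, so the fibrewise-oriented circle bundle has vanishing Euler class and is trivial; thus $\overline{\Sigma}_{C,h}$ is obtained from $\mathrm{Im}\,\pi\times S^1$ by filling each boundary torus along the fibre slope, and I would compute the result by induction on the number $n\in\{2,4\}$ of holes. The base case $n=2$ is the cylinder: here $\mathrm{Im}\,\pi\times S^1\cong T^2\times[0,1]=S^1_\theta\times S^1_\phi\times[0,1]$, and filling both ends along the fibre slope caps off the fibre $S^1_\phi$ at each end, yielding $S^1_\theta\times S^2\cong S^1\times S^2$. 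This is exactly the ``suspension along meridians'' picture of Figure \ref{fig:preimage pi}: the preimage of each vertical interval is a sphere. The inductive step, passing from $n$ to $n+1$ holes, amounts to excising the fibred solid torus lying over a small disk in the base and regluing it along the fibre slope; since the core fibre is unknotted and null-homotopic (the fibre class dies after the first filling), the new meridian is the old $0$-framed longitude, so this is $0$-surgery on an unknot and therefore forms the connected sum with one further copy of $S^1\times S^2$. Consequently $n=4$ gives $\left(S^1\times S^2\right)\#\left(S^1\times S^2\right)\#\left(S^1\times S^2\right)$, as claimed.

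The main obstacle throughout is the bookkeeping of the two fibre-slope fillings and surgeries: one must justify that collapsing a fibre over a boundary circle is genuinely a fibre-slope Dehn filling, and that each regluing is the $0$-framed surgery responsible for a single connected summand $S^1\times S^2$. The explicit decomposition of the base into the ``lunettes'' of Figure \ref{fig: we cut out the BALLS} is one concrete way to organise this computation in the four-hole case, reducing it to a sequence of the elementary cylinder and disk building blocks.
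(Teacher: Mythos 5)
Your proposal is correct in outline and reaches the same conclusions, but by a genuinely different route from the paper. The paper argues by explicit cut-and-paste: in the surjective case it directly builds a global trivialisation by continuously parametrising the fibres; in the two-hole case it observes that the preimage of each ruling of the cylinder is a sphere; and in the four-hole case it cuts the four-holed sphere along a segment into a disk with six boundary half-circles (``lunettes'') identified pairwise, recognises the preimage of the disk as $S^3$ and the preimages of the lunettes as balls, and invokes its key lemma that $S^3$ with two balls removed and their boundary spheres identified is $S^1\times S^2$. You instead package everything in bundle-theoretic and surgery-theoretic language: the degenerate boundary fibres become fibre-slope Dehn fillings of a trivial circle bundle, triviality is obtained from a section (closed base) or from vanishing of $H^2$ (base with boundary), and the hole count is handled by an induction in which each additional hole is a $0$-framed surgery on an unknot, hence one more $S^1\times S^2$ summand; this yields the cleaner general statement that $n$ boundary circles give $\#_{n-1}\left(S^1\times S^2\right)$, of which the theorem's cases $n=2,4$ are instances. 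The trade-off is where the technical burden sits: the paper's argument is elementary and pictorial but ad hoc, while yours relies on two identifications you rightly flag as needing proof --- that collapsing fibres over a boundary circle really is a fibre-slope filling (this follows from the normal form of \eqref{eq:full squares} near the boundary, where the preimage of a collar is a $D^2$-bundle over the circle), and that the surgery framing is the disk framing (this follows because, after the first filling, a fibre bounds an embedded disk made of a vertical annulus over an arc to the filled boundary capped with a meridian disk, and the framing induced by that disk is the fibre framing; note that null-homotopy alone would not suffice here, an embedded disk is what the connected-sum conclusion requires). Both proofs share the same glossed-over point: topological local triviality of $\pi$ across the equator, where the fibres are the compactified parabolas passing through the points at which $\overline{\Sigma}_{C,h}$ is singular as a variety; your section argument in Case 1 (the topmost point of each ellipse escaping to the North pole as $m_3\to 0$) is a nice concrete device, but its continuity across the equator needs the same care as the paper's asserted continuity of its fibre parametrisations.
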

\begin{proof}
{\bf Case 1} The simplest case is when $\pi$ is surjective. To be clear, we denote with $S^2_C$ the sphere corresponding to the Casimir and with $S^2$ the sphere corresponding to the compactication of $\mathbb{R}^2.$ By Proposition \eqref{prop3.2} we already know that $\overline{\Sigma}_{C,h}$ is a hypersurface in $S^2\times S^2_C.$ For each $p\in S^2_C$, we have a corresponding simple continuous closed curve in $S^2$ given explicitly by an equation. Let $\phi_p:S^1 \rightarrow S^2$ be a continuous paramaterisation of the simple continuous curve in $S^2.$ This depends continuously on $p\in S^2_C$ and therefore $\Psi: S^1\times S^2_C\rightarrow S^2\times S^2_C$ given by $\Psi(t,p)=(\phi_p(t),p)$ is continuous. Furthermore, notice that $\phi_p$ is a homeomorphism onto its image for each $p\in S^2_C$ and that $pr_2\circ\Psi=pr_2$. Therefore, $\Psi$ is a homeomorphism onto its image $W:=\Psi(S^1\times S^2_C)=\overline{\Sigma}_{C,h}\subset S^2\times S^2_C.$ Now $\pi: \overline{\Sigma}_{C,h}\rightarrow S^2_C$ is the same as $pr_2{|_W}.$ Thus $\Psi^{-1}:W\rightarrow S^1\times S^2_C$ is a global continuous trivialisation with $pr_2\circ \Psi=pr_2{|_W}.$

    
{\bf Case 2} In the second case, the image of $\pi$ is $S^2\setminus\{\Delta_1\cup \Delta_2\}$ where $\Delta_1$ and $\Delta_2$ are open disks. Therefore the image of $\pi$ is diffeomorphic to the compact cylinder $S^1\times [0,1]$. If we remove also the boundaries of these disks and apply Case 1, we see that what we obtain is homeomorphic to the trivial fibration with base
the open cylinder $S^1\times (0,1)$ and fibre $S^1.$
Over each boundary point of this cylinder, the inverse image of $\pi$ is given by a point while over each point in $S^1\times (0,1)$ the inverse image of $\pi$ is a circle. Now if we look at the inverse image under $\pi$ of any ruling ${Q}\times [0,1]$, where $Q$ is a fixed point in $S^1$ we get that $\pi^{-1}({Q}\times [0,1])$ is homeomorphic to a sphere as it is immediate to see (c.f. Figure \ref{fig:preimage pi}). As $Q$ varies in $S^1$, it is immediate to see that in this case $\overline{\Sigma}_{C,h}$ is homeomorphic to $S^1\times S^2.$

{\bf Case 3} The third case is the most involved. Before dealing with it, we introduce some preliminary constructions. First observe that we can represent a 3-dimensional sphere $S^3$ as a fibration $\Pi: S^3 \rightarrow \overline{\Delta}$ over a closed disk: the fibres over the interior points are homeomorphic to $S^1$ and those over the boundary points are just points. This is given by $$\Pi: \{x\in \mathbb{R}^4: x_1^2+x_2^2+x_3^2+x_4^2=1\} \rightarrow \{(x_1,x_2)\in \mathbb{R}^2: x_1^2+x_2^2\leq 1\}.$$ Now consider a closed lunette $L\subset \overline{\Delta}$ as the ones we cut from a two-dimensional disk in the  bottom left part of  Figure \ref{fig: we cut out the BALLS} and the inverse image $\Pi^{-1}(L).$ It is clear that this is just homeomorphic to a 3-dimensional ball $B^3$ since the inverse image of $\Pi$ above every point in $L\setminus \partial\overline{\Delta}$ is a circle while above every point in $L\cap\partial\overline{\Delta}$ the inverse image is a point. So $\Pi^{-1}(L)$ is like having the lunette rotate by $360$ degrees around the segment $L\cap \partial\overline{\Delta}.$ 

For analysing the third case, we also need the following preliminary construction. With reference to the previous paragraph, consider again $\Pi: S^3 \rightarrow \overline{\Delta}.$ This time we consider two disjoint closed lunettes $L_1,L_2$ in $\overline{\Delta}.$ Consider $\Pi^{-1}(\overline{\Delta}\setminus{L_1\cup L_2}),$ this is of course just $S^3$ with two disjoint closed balls $B^3_1,B^3_2$ removed. We claim that if we glue $S^3$ identifying two the boundaries $\partial B^3_1, \partial B^3_2$ the resulting manifold is homeomorphic to $S^1\times S^2.$ To see this, represent $S^3$ in $\mathbb{R}^4$ as $S^3=\{x\in \mathbb{R}^4: x_1^2+x_2^2+x_3^2+x_4^2=1\}$ and the two closed balls as $B_1^3:=\{x\in S^3: x_1^2+x_2^2+x_3^2\leq \epsilon^2, \; x_4>0 \}$ and $B_1^3:=\{x\in S^3: x_1^2+x_2^2+x_3^2\leq \epsilon^2, \; x_4<0 \}.$ Using the meridians corresponding to the coordinate $x_4$, it is clear that any point $Q$ in $S^3\setminus\{B_1^3\cup B_2^3\}$ after the gluing of the 2-dimensional spheres $\partial B_1^3$ and $\partial B_2^3$ can be represented by a point $P(Q)$ on $S^2$ (obtained by intersecting the meridian going through $Q$ with $S^2$) and a coordinate on the meridian. But under identification of $\partial B_1^3$ and $\partial B_2^3$, each meridian becomes a circle (each meridian intersects the two boundaries at two different points which are then identified under gluing). Therefore any point $P$ on $S^3\setminus\{B_1^3\cup B_2^3\}/~$ ($/~$ represents the operation of gluing) is uniquely identified with $(Q(P),\theta(P))\in S^2\times S^1$. Thus $S^3\setminus\{B_1^3\cup B_2^3\}/~$ is homeomorphic to $S^2\times S^1$. For illustration, see Figure \ref{fig:S3}.

\begin{figure}
    \centering

 
\tikzset{
pattern size/.store in=\mcSize, 
pattern size = 5pt,
pattern thickness/.store in=\mcThickness, 
pattern thickness = 0.3pt,
pattern radius/.store in=\mcRadius, 
pattern radius = 1pt}
\makeatletter
\pgfutil@ifundefined{pgf@pattern@name@_yum47bshk}{
\pgfdeclarepatternformonly[\mcThickness,\mcSize]{_yum47bshk}
{\pgfqpoint{0pt}{0pt}}
{\pgfpoint{\mcSize+\mcThickness}{\mcSize+\mcThickness}}
{\pgfpoint{\mcSize}{\mcSize}}
{
\pgfsetcolor{\tikz@pattern@color}
\pgfsetlinewidth{\mcThickness}
\pgfpathmoveto{\pgfqpoint{0pt}{0pt}}
\pgfpathlineto{\pgfpoint{\mcSize+\mcThickness}{\mcSize+\mcThickness}}
\pgfusepath{stroke}
}}
\makeatother

 
\tikzset{
pattern size/.store in=\mcSize, 
pattern size = 5pt,
pattern thickness/.store in=\mcThickness, 
pattern thickness = 0.3pt,
pattern radius/.store in=\mcRadius, 
pattern radius = 1pt}
\makeatletter
\pgfutil@ifundefined{pgf@pattern@name@_p2oy0bcei}{
\pgfdeclarepatternformonly[\mcThickness,\mcSize]{_p2oy0bcei}
{\pgfqpoint{0pt}{-\mcThickness}}
{\pgfpoint{\mcSize}{\mcSize}}
{\pgfpoint{\mcSize}{\mcSize}}
{
\pgfsetcolor{\tikz@pattern@color}
\pgfsetlinewidth{\mcThickness}
\pgfpathmoveto{\pgfqpoint{0pt}{\mcSize}}
\pgfpathlineto{\pgfpoint{\mcSize+\mcThickness}{-\mcThickness}}
\pgfusepath{stroke}
}}
\makeatother
\tikzset{every picture/.style={line width=0.75pt}} 

\begin{tikzpicture}[x=0.75pt,y=0.75pt,yscale=-1,xscale=1]

\draw  [draw opacity=0][pattern=_yum47bshk,pattern size=6pt,pattern thickness=0.75pt,pattern radius=0pt, pattern color={rgb, 255:red, 189; green, 16; blue, 224}] (135.73,232.56) .. controls (143.47,226.6) and (160.37,222.61) .. (179.85,222.88) .. controls (198.41,223.14) and (214.4,227.19) .. (222.28,232.92) -- (179.36,240.44) -- cycle ; \draw  [color={rgb, 255:red, 245; green, 166; blue, 35 }  ,draw opacity=1 ] (135.73,232.56) .. controls (143.47,226.6) and (160.37,222.61) .. (179.85,222.88) .. controls (198.41,223.14) and (214.4,227.19) .. (222.28,232.92) ;  
\draw  [draw opacity=0][pattern=_p2oy0bcei,pattern size=6pt,pattern thickness=0.75pt,pattern radius=0pt, pattern color={rgb, 255:red, 189; green, 16; blue, 224}] (216.12,68.25) .. controls (207.24,72.1) and (193.48,74.54) .. (178.09,74.46) .. controls (163.6,74.39) and (150.65,72.1) .. (141.89,68.55) -- (178.4,57.78) -- cycle ; \draw  [color={rgb, 255:red, 245; green, 166; blue, 35 }  ,draw opacity=1 ] (216.12,68.25) .. controls (207.24,72.1) and (193.48,74.54) .. (178.09,74.46) .. controls (163.6,74.39) and (150.65,72.1) .. (141.89,68.55) ;  
\draw  [dash pattern={on 4.5pt off 4.5pt}] (89.78,150) .. controls (89.78,142.88) and (129.82,137.11) .. (179.22,137.11) .. controls (228.62,137.11) and (268.67,142.88) .. (268.67,150) .. controls (268.67,157.12) and (228.62,162.89) .. (179.22,162.89) .. controls (129.82,162.89) and (89.78,157.12) .. (89.78,150) -- cycle ;
\draw  [color={rgb, 255:red, 255; green, 255; blue, 255 }  ,draw opacity=1 ][line width=0.75] [line join = round][line cap = round] (136,236.5) .. controls (135.75,234.92) and (134.38,233.1) .. (135.25,231.75) .. controls (135.79,230.9) and (141.6,235.51) .. (136.25,231.5) ;
\draw  [color={rgb, 255:red, 255; green, 255; blue, 255 }  ,draw opacity=1 ][line width=0.75] [line join = round][line cap = round] (136.5,231.5) .. controls (137.58,232.42) and (140.54,233.07) .. (139.75,234.25) .. controls (138.96,235.43) and (137.34,232.7) .. (136,232.25) .. controls (135.14,231.96) and (137.69,234.45) .. (138.25,233.75) .. controls (138.88,232.96) and (137.42,231.92) .. (137,231) ;
\draw  [color={rgb, 255:red, 255; green, 255; blue, 255 }  ,draw opacity=1 ][line width=0.75] [line join = round][line cap = round] (141.25,234.25) .. controls (142.17,235.67) and (144.53,236.9) .. (144,238.5) .. controls (143.58,239.75) and (141.55,237.46) .. (140.25,237.25) .. controls (138.93,237.04) and (135.06,237.85) .. (136.25,237.25) .. controls (138.53,236.11) and (141.75,235.59) .. (142.75,233.25) ;
\draw  [color={rgb, 255:red, 255; green, 255; blue, 255 }  ,draw opacity=1 ][line width=0.75] [line join = round][line cap = round] (142.75,237.25) .. controls (141.58,236.58) and (140.12,236.27) .. (139.25,235.25) .. controls (138.91,234.85) and (139.48,233.3) .. (139.75,233.75) .. controls (140.46,234.93) and (141.08,236.42) .. (140.75,237.75) .. controls (140.55,238.56) and (139.75,236.42) .. (139.25,235.75) ;
\draw  [color={rgb, 255:red, 255; green, 255; blue, 255 }  ,draw opacity=1 ][line width=0.75] [line join = round][line cap = round] (136.25,230) .. controls (137.25,230.58) and (138.25,231.17) .. (139.25,231.75) ;
\draw  [color={rgb, 255:red, 255; green, 255; blue, 255 }  ,draw opacity=1 ][line width=0.75] [line join = round][line cap = round] (137.5,230.75) .. controls (138.08,230.83) and (138.67,230.92) .. (139.25,231) ;
\draw  [color={rgb, 255:red, 255; green, 255; blue, 255 }  ,draw opacity=1 ][line width=0.75] [line join = round][line cap = round] (141.25,232.75) .. controls (145.32,234.63) and (145.49,234.91) .. (142.25,232.75) ;
\draw  [color={rgb, 255:red, 255; green, 255; blue, 255 }  ,draw opacity=1 ][line width=0.75] [line join = round][line cap = round] (141.75,232.5) .. controls (142.58,233) and (143.42,233.5) .. (144.25,234) ;
\draw  [color={rgb, 255:red, 255; green, 255; blue, 255 }  ,draw opacity=1 ][line width=0.75] [line join = round][line cap = round] (214.75,234.75) .. controls (213.25,235.17) and (208.95,236.86) .. (210.25,236) .. controls (212.27,234.65) and (214.68,233.97) .. (217,233.25) .. controls (218,232.94) and (215.24,234.42) .. (214.25,234.75) .. controls (213.5,235) and (215.58,233.92) .. (216.25,233.5) ;
\draw  [color={rgb, 255:red, 255; green, 255; blue, 255 }  ,draw opacity=1 ][line width=0.75] [line join = round][line cap = round] (219,233.75) .. controls (220.83,232.33) and (222.46,230.6) .. (224.5,229.5) .. controls (229.44,226.84) and (219.76,234.51) .. (218.25,233.75) .. controls (215.91,232.58) and (222.4,229.46) .. (225,229.75) .. controls (226.77,229.95) and (216.29,233.56) .. (220.5,230.75) ;
\draw  [color={rgb, 255:red, 255; green, 255; blue, 255 }  ,draw opacity=1 ][line width=0.75] [line join = round][line cap = round] (225.25,231.75) .. controls (223.75,232.75) and (222.5,234.31) .. (220.75,234.75) .. controls (218.95,235.2) and (222.24,229.66) .. (224,230.25) .. controls (228.5,231.75) and (218.43,235.05) .. (220,234) .. controls (221.77,232.82) and (223.83,232.17) .. (225.75,231.25) ;
\draw  [color={rgb, 255:red, 255; green, 255; blue, 255 }  ,draw opacity=1 ][line width=0.75] [line join = round][line cap = round] (214.25,234.25) .. controls (214.9,234.09) and (215.44,233.62) .. (216,233.25) ;
\draw  [color={rgb, 255:red, 255; green, 255; blue, 255 }  ,draw opacity=1 ][line width=0.75] [line join = round][line cap = round] (215.5,233.75) .. controls (215.92,233.42) and (216.33,233.08) .. (216.75,232.75) ;
\draw  [color={rgb, 255:red, 255; green, 255; blue, 255 }  ,draw opacity=1 ][line width=0.75] [line join = round][line cap = round] (213.75,234.25) .. controls (214.5,234) and (215.25,233.75) .. (216,233.5) ;
\draw  [color={rgb, 255:red, 255; green, 255; blue, 255 }  ,draw opacity=1 ][line width=0.75] [line join = round][line cap = round] (172.75,59.25) .. controls (174.44,58.83) and (174.07,58.88) .. (171.75,59.75) ;
\draw  [color={rgb, 255:red, 255; green, 255; blue, 255 }  ,draw opacity=1 ][line width=0.75] [line join = round][line cap = round] (177.75,56.75) .. controls (177.75,57.08) and (177.75,57.42) .. (177.75,57.75) ;
\draw  [color={rgb, 255:red, 255; green, 255; blue, 255 }  ,draw opacity=1 ][line width=0.75] [line join = round][line cap = round] (178.25,58.25) .. controls (178.25,58.33) and (178.25,58.42) .. (178.25,58.5) ;
\draw  [color={rgb, 255:red, 255; green, 255; blue, 255 }  ,draw opacity=1 ][line width=0.75] [line join = round][line cap = round] (215.75,233.17) .. controls (213.7,234.88) and (213.72,234.79) .. (216,233.17) ;
\draw  [color={rgb, 255:red, 255; green, 255; blue, 255 }  ,draw opacity=1 ][line width=0.75] [line join = round][line cap = round] (215.25,233.92) .. controls (214.25,234.92) and (214.25,234.92) .. (215.25,233.92) ;
\draw  [color={rgb, 255:red, 255; green, 255; blue, 255 }  ,draw opacity=1 ][line width=0.75] [line join = round][line cap = round] (194.5,24.92) .. controls (194.5,25.08) and (194.5,25.25) .. (194.5,25.42) ;
\draw    (121.25,236.92) -- (135.57,232.95) ;
\draw [shift={(137.5,232.42)}, rotate = 164.52] [color={rgb, 255:red, 0; green, 0; blue, 0 }  ][line width=0.75]    (10.93,-3.29) .. controls (6.95,-1.4) and (3.31,-0.3) .. (0,0) .. controls (3.31,0.3) and (6.95,1.4) .. (10.93,3.29)   ;
\draw    (234,50.83) -- (218.82,62.84) ;
\draw [shift={(217.25,64.08)}, rotate = 321.65] [color={rgb, 255:red, 0; green, 0; blue, 0 }  ][line width=0.75]    (10.93,-3.29) .. controls (6.95,-1.4) and (3.31,-0.3) .. (0,0) .. controls (3.31,0.3) and (6.95,1.4) .. (10.93,3.29)   ;
\draw  [fill={rgb, 255:red, 0; green, 0; blue, 0 }  ,fill opacity=1 ] (201.14,72.67) .. controls (201.14,71.84) and (201.81,71.17) .. (202.64,71.17) .. controls (203.46,71.17) and (204.14,71.84) .. (204.14,72.67) .. controls (204.14,73.49) and (203.46,74.17) .. (202.64,74.17) .. controls (201.81,74.17) and (201.14,73.49) .. (201.14,72.67) -- cycle ;
\draw  [fill={rgb, 255:red, 0; green, 0; blue, 0 }  ,fill opacity=1 ] (200.48,225.05) .. controls (200.48,224.23) and (201.15,223.55) .. (201.98,223.55) .. controls (202.8,223.55) and (203.48,224.23) .. (203.48,225.05) .. controls (203.48,225.88) and (202.8,226.55) .. (201.98,226.55) .. controls (201.15,226.55) and (200.48,225.88) .. (200.48,225.05) -- cycle ;
\draw  [fill={rgb, 255:red, 0; green, 0; blue, 0 }  ,fill opacity=1 ] (228.84,119.97) .. controls (228.84,119.14) and (229.51,118.47) .. (230.34,118.47) .. controls (231.16,118.47) and (231.84,119.14) .. (231.84,119.97) .. controls (231.84,120.79) and (231.16,121.47) .. (230.34,121.47) .. controls (229.51,121.47) and (228.84,120.79) .. (228.84,119.97) -- cycle ;
\draw  [draw opacity=0] (179.13,60.5) .. controls (213.21,67.54) and (236.78,109.18) .. (232.56,157.02) .. controls (229.02,197.18) and (206.96,229.61) .. (179.39,239.56) -- (165.28,151.08) -- cycle ; \draw   (179.13,60.5) .. controls (213.21,67.54) and (236.78,109.18) .. (232.56,157.02) .. controls (229.02,197.18) and (206.96,229.61) .. (179.39,239.56) ;  
\draw  [draw opacity=0] (202.64,224.38) .. controls (192.04,209.35) and (184.88,180.62) .. (184.88,147.65) .. controls (184.88,115.85) and (191.54,87.99) .. (201.53,72.57) -- (219.52,147.65) -- cycle ; \draw   (202.64,224.38) .. controls (192.04,209.35) and (184.88,180.62) .. (184.88,147.65) .. controls (184.88,115.85) and (191.54,87.99) .. (201.53,72.57) ;  
\draw [color={rgb, 255:red, 189; green, 16; blue, 224 }  ,draw opacity=0.3 ] [dash pattern={on 4.5pt off 4.5pt}]  (179.17,119.5) -- (228.84,119.97) ;
\draw  [dash pattern={on 4.5pt off 4.5pt}]  (179.06,33.06) -- (179.72,269.83) ;
\draw [shift={(179.06,30.06)}, rotate = 89.84] [fill={rgb, 255:red, 0; green, 0; blue, 0 }  ][line width=0.08]  [draw opacity=0] (8.93,-4.29) -- (0,0) -- (8.93,4.29) -- cycle    ;
\draw  [color={rgb, 255:red, 255; green, 255; blue, 255 }  ,draw opacity=1 ][line width=0.75] [line join = round][line cap = round] (62.83,136) .. controls (62.83,136.08) and (62.83,136.17) .. (62.83,136.25) ;
\draw  [color={rgb, 255:red, 255; green, 255; blue, 255 }  ,draw opacity=1 ][line width=0.75] [line join = round][line cap = round] (180.33,58.08) .. controls (180.33,58.5) and (180.33,58.92) .. (180.33,59.33) ;
\draw  [color={rgb, 255:red, 255; green, 255; blue, 255 }  ,draw opacity=1 ][line width=0.75] [line join = round][line cap = round] (180.83,59.08) .. controls (180.58,59.17) and (180.33,59.25) .. (180.08,59.33) ;
\draw   (89.78,149.94) .. controls (89.78,100.45) and (129.9,60.33) .. (179.39,60.33) .. controls (228.88,60.33) and (269,100.45) .. (269,149.94) .. controls (269,199.44) and (228.88,239.56) .. (179.39,239.56) .. controls (129.9,239.56) and (89.78,199.44) .. (89.78,149.94) -- cycle ;

\draw (186.92,22.08) node [anchor=north west][inner sep=0.75pt]  [font=\small] [align=left] {$\displaystyle x_{4}$};
\draw (137,34.67) node [anchor=north west][inner sep=0.75pt]   [align=left] {$\displaystyle B_{1}^{3}$};
\draw (192,241.33) node [anchor=north west][inner sep=0.75pt]   [align=left] {$\displaystyle B_{2}^{3}$};
\draw (278.5,139.17) node [anchor=north west][inner sep=0.75pt]   [align=left] {$\displaystyle S^{3}$};
\draw (50,229.67) node [anchor=north west][inner sep=0.75pt]   [align=left] {$\displaystyle S^{2} =\ \partial B_{2}^{3}$};
\draw (230.5,30.58) node [anchor=north west][inner sep=0.75pt]   [align=left] {$\displaystyle S^{2} \ =\ \partial B_{1}^{3}$};
\draw (205.6,98.57) node [anchor=north west][inner sep=0.75pt]   [align=left] {$\displaystyle Q$};
\draw (136.5,110.5) node [anchor=north west][inner sep=0.75pt]  [font=\small] [align=left] {$\displaystyle x_{4}( Q)$};

\end{tikzpicture}

    \caption{The  three dimensional sphere $S^3$ with two three dimensional balls $B_1^3$ and $B_2^3$ cut out. After the identification of their boundaries, $S^3\backslash{B_1^3\cup B_2^3}$ becomes $S^2\times S^1$. }
    \label{fig:S3}
\end{figure}
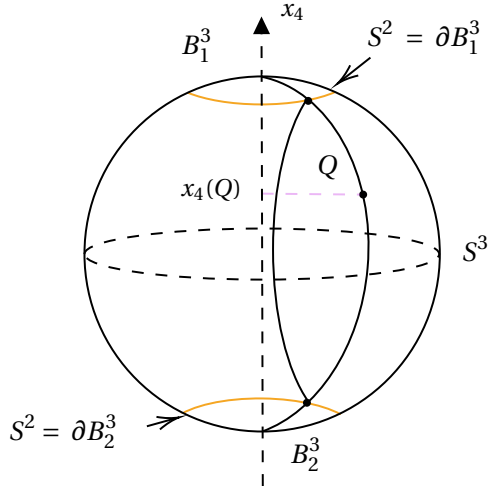
If instead of two lunettes in $\overline{\Delta}$ we have four that are removed, we have correspondingly four balls $B^3$ removed from $S^3.$ We argue that the manifold obtained by removing these four balls from $S^3$ and identifying the four boundary spheres $\partial B^3$ pairwise is homeomorphic to the connected sum of two copies of $S^1\times S^2$. Moving these balls on $S^3$ it is always possible to arrange them in such a way that one pair of the boundary spheres $\partial B^3$ that are identified lie on one of the hemisphere cut out on $S^3$ by a hyperplane containing the $x_4$-axis, while the other pair of boundary spheres lie on the opposite hemisphere. We can cut out $S^3$ along this hyperplane and represent it as a connected sum $S^3\#S^3.$ If we separate each of these spheres, then on each of them we have can remove two balls, identify the corresponding pair of boundaries and obtain by the construction detailed above a manifold homeomorphic to $S^1\times S^2.$ Gluing them again, we get the claim.

We are now in a position to easily deal with the third case. In this case the image of $\pi$ is $S^2$ with four open disks removed, call it $K$. Using stereographic projection with centre inside one of these disks, $K$ can be presented in the plane as in the upper left part of Figure \ref{fig: we cut out the BALLS} ($K$ is the inside of the figure-eight-like configuration). In order to understand the topology of $\pi^{-1}(K)$, we cut $K$ along the segment shown in Figure \ref{fig: we cut out the BALLS} so that $K$ turns out to be homeomorphic to a closed disk $\overline{\Delta}$ in which six small open half-circle $\{C_1, \dots C_6\}$ on the boundary are singled out and identified pairwise (see the lower right part of Figure \ref{fig: we cut out the BALLS} where the half-circles are dashed). Observe that the inverse image $\pi^{-1}(\overline{C_i})$ for each $i$ is indeed homeomorphic to a sphere, since over any point in $C_i$ there is a circle, while over any point in $\partial C_i$ (which is also a boundary point of the disk and a boundary point of the original $K$) there is a point. These spheres are to be identified pairwise to produce $\overline{\Sigma}_{C,h}.$   

Now, instead of singling out the six small half-circles, we add six small lunettes (see the lower left part of figure \ref{fig: we cut out the BALLS}). Each of these lunettes is bounded on one side by one of the $C_i$ and on the other side by piece of the boundary of the disk, in this way obtaining a full closed disk $\overline{\Delta}$. Over each point inside the disk there is circle while over each point in the boundary there is a point. So the inverse image of the disk is indeed $S^3$.
 
The inverse image of each of these lunettes in $S^3$ is again a ball $B^3.$ When we glue $S^3$ pairwise along the boundaries of $\partial B^3$ of these balls (these are just the inverse images of the $\overline{C_i}$'s we singled out before), we get that the resulting manifold is homeomorphic to the connected sum $(S^1\times S^2)\#(S^1\times S^2)\# (S^1\times S^2)$ (one copy of $S^1\times S^2$ for each pair of sphere that is identified).

\end{proof}

 The degenerate case $C=0$ is immediate. In this case indeed, $m_1=m_2=m_3=0$ and the Hamiltonian gives an invariant curve in the $(\xi, p)$ plane given by $\{p^2-\xi=h\}$, that is a family of parabolae, depending on $h$. So the invariant manifold $\Sigma_{0,h}$ is one dimensional and is not compact in this case either. It can be compactified adding a point of at infinity, so  $\overline{\Sigma}_{C,h}$ is homeomorphic to a circle.

\subsection{Bifurcation diagram}\label{bif.sub1}
\begin{figure}
\centering
 \includegraphics[scale=.9]{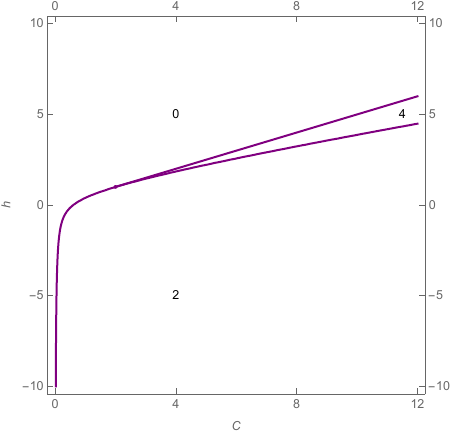}
 \caption{Bifurcation diagram}
  \label{fig: bifurcation}
\end{figure}

Changes in the topology of $\overline{\Sigma}_{C,h}$ depend on the values of $h$ and $C$. In this section, we discuss the corresponding bifurcation diagram. As we explain below, this diagram turns out to be the same as the one appearing in Figure 9 (b) of \cite{borisov2018reduction} describing bifurcation and existence of relative equilibria for different values of $(C, h).$
 




The computations are laborious, so we present just the essential points. 

We consider the polynomial (\ref{eq: bif}) in $m_3$. Since the number of holes in the image of $\pi$ is determined by the number of its roots, the bifurcations happen when this polynomial has zeros of multiplicity greater than one. Since (\ref{eq: bif}) is an even function in $m_3,$ we rewrite it as a polynomial of fourth degree in $y=m_3^2$. We proceed to divide it by monomials $(y-y_0)^2$ and $(y-y_0)^3$ and demand that for each case all the coefficients in the remainders (which are polynomial functions of $y_0$) have a common positive root $y_0^{\ast}$.

For the case of division by $(y-y_0)^3$, computations (which we omit for the sake of brevity) yield only one possible value of the parameters: $(C,h)=(2,1)$. This turns out to be the point where the two curves separate. 

When dividing by $(y-y_0)^2$, we get a remainder of the form 
\begin{equation}
\begin{split}
\label{eq: remainder}
&y \left(2 C^2 y_0-16 C h y_0+6 c y_0^2-6 C+32 h^2 y_0-24 h y_0^2+8 h+4 y_0^3+4 y_0\right)\\&-C^2 y_0^2+8 C h y_0^2-4 C y_0^3-16 h^2 y_0^2+16 h y_0^3-3 y_0^4-2 y_0^2+1
\end{split}
\end{equation}
The constant (in $y$) coefficient simplifies to 
\[
-(y_0 (C-4 h+y_0)+1) (y_0 (C-4 h+3 y_0)-1),
\]
with roots
\begin{equation}
\label{eq: roots y0}
\begin{sqcases}
    y_0= \frac{1}{6} \left(-\sqrt{(C-4 h)^2+12}-C+4 h\right)\\
    y_0 = \frac{1}{6} \left(\sqrt{(C-4 h)^2+12}-C+4 h\right)\\
    y_0=\frac{1}{2} \left(-\sqrt{(C-4 h)^2-4}-C+4 h\right)\\
    y_0= \frac{1}{2} \left(\sqrt{(C-4 h)^2-4}-C+4 h\right)
\end{sqcases}
\end{equation}
The first expression above is always negative; the second always positive; the last two are positive when $4h-C>0$ and negative otherwise. Additionally, they only exist when $(C-4h)^2\ge 4$.

Substituting the first and the last two (assuming they are positive) expressions from (\ref{eq: roots y0}) into the coefficient of $y$ from (\ref{eq: remainder}), taking into account the existence conditions described above and demanding that the coefficient in question be 0, we get the bifurcation diagram in Figure \ref{fig: bifurcation} (the tangent line stops at the point $C=2, h=1$ precisely because $(C-4h)^2\ge 4$).

The two equations in $h$ and $C$ that describe the curves in the bifurcation diagram are as follows:
\begin{equation}
\label{eq: param in c and h}
    \begin{split}
        \begin{sqcases}
            \frac{1}{54} \left(\sqrt{(C-4 h)^2+12}-C+4 h\right)^3+\frac{1}{6} (C-4 h) \left(\sqrt{(C-4 h)^2+12}-c+4 h\right)^2\\+\frac{1}{3} \left((C-4 h)^2+2\right) \left(\sqrt{(C-4 h)^2+12}-C+4 h\right)-6 C+8 h = 0,\\
              h= \frac{C}{2}.
        \end{sqcases}
    \end{split}
\end{equation}
As we have mentioned, this bifurcation diagram seems to  closely resemble  that for relative equilibria in \cite{borisov2018reduction} (Figure 9 (b)). More precisely,
\begin{proposition}
    The two bifurcation diagrams coincide. 
\end{proposition}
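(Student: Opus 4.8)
The plan is to prove the coincidence conceptually, by exhibiting both bifurcation diagrams as the critical-value locus of a single reduced function on $S^2_C$, and then to confirm the identification by the explicit algebraic comparison already prepared in \eqref{eq: param in c and h}.

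First I would make precise the reduced function implicit in \eqref{eq:full squares}. For fixed $P=(m_1,m_2,m_3)\in S^2_C$ with $m_3\neq 0$, the left-hand side of \eqref{eq:full squares} is a positive-definite quadratic form in $(\xi,p)$, so $\mathcal{H}$ restricted to the fibre $\{P\}\times\mathbb{R}^2$ attains a unique minimum at $p=\tfrac{m_1}{2}$, $\xi=\tfrac{1+m_2m_3}{2m_3^2}$. Denote this minimum by $\tilde{H}(P)$; then $\mathrm{Im}\,\pi=\{P\in S^2_C:\tilde{H}(P)\le h\}$ (away from the equator $\{m_3=0\}$, which is always in the image), its boundary is the level curve $\{\tilde{H}=h\}$, and this is exactly the system \eqref{eq: sys m2m3}. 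The number of holes of $\mathrm{Im}\,\pi$ equals the number of disk components of the superlevel set $\{\tilde{H}>h\}$, which by the standard Morse/Ehresmann argument is locally constant in $(C,h)$ away from the critical values of $\tilde{H}$ and changes precisely when $h$ crosses such a value.

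The second step is to identify these critical values with relative equilibria. At the fibre minimiser one has $\partial_\xi\mathcal{H}=\partial_p\mathcal{H}=0$ identically, so by the envelope principle the constrained differential of $\tilde{H}$ on $S^2_C$ equals the restriction of $d_m\mathcal{H}$ to $TS^2_C$. Hence $P$ is a constrained critical point of $\tilde{H}$ if and only if $d_m\mathcal{H}$ is normal to $S^2_C$, i.e. $\nabla_m\mathcal{H}=\lambda\,\nabla_m\mathcal{C}$; together with $\partial_\xi\mathcal{H}=\partial_p\mathcal{H}=0$ this is exactly the condition $d\mathcal{H}=\lambda\,d\mathcal{C}$ defining a relative equilibrium. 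Consequently the locus in the $(C,h)$-plane where $\tilde{H}$ has a critical value -- equivalently, where the level curve \eqref{eq: sys m2m3} becomes self-tangent, i.e. where the polynomial \eqref{eq: bif} acquires a multiple root -- is exactly the locus on which relative equilibria with Casimir $C$ and energy $h$ exist. This already yields one inclusion cleanly: by Lemma \ref{lemma.smooth}, $\Sigma_{C,h}$ is smooth off $\gamma$, so on each connected component of the complement of $\gamma$ the common level set varies without critical values and its topology -- hence the hole count -- is constant, forcing the topology-bifurcation locus to lie inside $\gamma$.

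It then remains to verify the reverse inclusion and to match the resulting curve with Figure~9(b) of \cite{borisov2018reduction}. Here I would carry out the elimination directly: substituting the admissible roots \eqref{eq: roots y0} into the coefficient of $y$ in \eqref{eq: remainder} produces the two branches \eqref{eq: param in c and h}, and one checks these cut out the same plane curve as the relative-equilibria relation obtained by eliminating $\lambda$ and the $m_i$ from $\nabla_m\mathcal{H}=\lambda\,\nabla_m\mathcal{C}$, $\mathcal{C}=C$, $\mathcal{H}=h$. The main obstacle is precisely this algebraic identification: the branches carry nested radicals of the form $\sqrt{(C-4h)^2\pm\ldots}$ and meet at the distinguished point $(C,h)=(2,1)$ (the triple-root case), so one must track the existence constraint $(C-4h)^2\ge 4$ and the line $h=\tfrac{C}{2}$ to confirm that the two parametrisations sweep out identical arcs rather than merely tangent or partially overlapping pieces. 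A secondary subtlety is the equator $\{m_3=0\}$, where the fibres are parabolae and $\tilde{H}$ blows up: one must confirm (as the $\tfrac{1}{2m_3^2}$ term in \eqref{eq: sys m2m3} already indicates) that the holes never reach the equator, so that no relative equilibria are lost in the correspondence.
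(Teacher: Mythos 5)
Your first half is conceptually sound and genuinely different from the paper's argument: the envelope identity (at the fibre minimiser, $d\tilde H$ equals the restriction of $d_m\mathcal H$ to $TS^2_C$), combined with the observation that the fibre restriction of $\mathcal H$ is a positive-definite quadratic for $m_3\neq 0$ (so fibre-critical means fibre-minimising), correctly identifies constrained critical points of $\tilde H$ with relative equilibria, and the Morse-type statement that the hole count can only change when $h$ crosses a critical value of $\tilde H$ then gives the inclusion of the topology-bifurcation locus in $\gamma$. But even this half has loose ends. The system \eqref{eq: sys m2m3} is not the boundary curve $\{\tilde H=h\}$; it is that curve intersected with the great circle $\{m_1=0\}$ (i.e.\ $m_2^2+m_3^2=C$), and a multiple root of \eqref{eq: bif} records \emph{tangency of the level curve with that circle}, not ``self-tangency'' of the level curve. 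Consequently your asserted chain ``critical value of $\tilde H$ $\Leftrightarrow$ multiple root of \eqref{eq: bif}'' is not automatic: it requires showing that every constrained critical point of $\tilde H$ on $S^2_C$ lies on $\{m_1=0\}$. This is a one-line Lagrange computation you never make: $\partial_{m_1}\tilde H=-m_1/2=2\lambda m_1$ forces $\lambda=-\tfrac14$ if $m_1\neq 0$, and then $\partial_{m_2}\tilde H=-\frac{m_2m_3+1}{2m_3}=-\frac{m_2}{2}$ gives the contradiction $1=0$.

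The genuine gap is the reverse inclusion, which is the actual content of the proposition and is the entirety of the paper's proof. You explicitly defer it (``one checks these cut out the same plane curve\dots'') and even label it the main obstacle; a proof cannot stop there, since without it you have shown only that the topology diagram sits inside $\gamma$, not that the two diagrams coincide. The paper closes exactly this step: it substitutes the explicit parametrisation \eqref{eq: bifurcation q m3} of the relative-equilibria curves (coming from \eqref{eq: relative equilibria q m3}) into the first equation of \eqref{eq: param in c and h} and verifies the result is identically zero; it gets the reverse containment of that branch from the fact that $C(q)$ sweeps $(0,+\infty)$ and $h(q)$ sweeps $(-\infty,+\infty)$ as $q$ ranges over $[0,\pi]$; and it matches the two rays on the line $h=C/2$ by minimising $C(m_3)=m_3^2+m_3^{-2}$ (minimum $2$ at $m_3=\pm1$, agreeing with the constraint $(C-4h)^2\geq 4$ that truncates the line at $(C,h)=(2,1)$). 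Note also that your proposed route for this step---eliminating $\lambda$ and the $m_i$ from the full Lagrange system---is substantially harder than what is needed, because the explicit solutions \eqref{eq: relative equilibria q m3} are already available in the literature. Alternatively, your conceptual framework \emph{could} be completed without the heavy algebra: once all critical points of $\tilde H$ are known to lie on $\{m_1=0\}$, ``$h$ is a critical value of $\tilde H|_{S^2_C}$'' becomes equivalent to ``\eqref{eq: bif} has an admissible multiple root,'' and both inclusions follow at once; but that key fact, and hence the equality, is missing from the proposal as written.
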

\begin{proof}
The bifurcation diagram in \cite{garcia2021attracting} is constructed in the following manner: two types of relative equilibria can be parametrised by $m_3$ and $q$; their explicit forms are
\begin{equation}
    \label{eq: relative equilibria q m3}
\begin{split}
   & \begin{cases}
m_1=p=0,\\
m_3 = \pm\sqrt{\tan\left(\frac{q}{2}\right)}\\
m_2 = \mp\tan\left(\frac{q}{2}\right)^{\frac32}
    \end{cases}, \ 
    \begin{cases}
q = \frac{\pi}{2}\\
m_1=p=0\\
m_2 = -\frac{1}{m_3}.
    \end{cases}
\end{split}
\end{equation}
These expressions, when substituted into the formulae for $\mathcal{C}$ and  $\mathcal{H}$, give two curves in the  $\{C,h\}$ plane, parametrised respectively by $q$ and $m_3$:

    \begin{equation}
    \label{eq: bifurcation q m3}
        \begin{split}
            \begin{sqcases}
            \{C(q),h(q)\} =    \left\{\tan ^3\left(\frac{q}{2}\right)+\tan \left(\frac{q}{2}\right),\frac{1}{2} \left(\tan ^3\left(\frac{q}{2}\right)+\tan \left(\frac{q}{2}\right)\right)+\cot (q) \left(\tan ^2\left(\frac{q}{2}\right)+\tan \left(\frac{q}{2}\right) \cot (q)-1\right)\right\},\\ 
             \{C(m_3),h(m_3)\}   =  \left\{m_3^2+\frac{1}{m_3^2},\frac{1}{2} \left(m_3^2+\frac{1}{m_3^2}\right)\right\}
            \end{sqcases}
        \end{split}
    \end{equation}
On the other hand, equations (\ref{eq: param in c and h}) for the  bifurcations of the topology of $\Sigma_{C,h}$ are formulated explicitly through $C$ and $h$; we want to demonstrate that the loci of (\ref{eq: param in c and h}) and (\ref{eq: bifurcation q m3}) coincide. 

Let us ascertain  that the first equations from the two pairs describe the same curve. To do so, we substitute $C =\tan ^3\left(\frac{q}{2}\right)+\tan \left(\frac{q}{2}\right) $ and  $h =\frac{1}{2} \left(\tan ^3\left(\frac{q}{2}\right)+\tan \left(\frac{q}{2}\right)\right)+\cot (q) \left(\tan ^2\left(\frac{q}{2}\right)+\tan \left(\frac{q}{2}\right) \cot (q)-1\right) $ into the first equation in (\ref{eq: param in c and h}). Simplifying this expression, we get identical 0, which proves that the locus described by \eqref{eq: bifurcation q m3} is contained in one described by \eqref{eq: param in c and h}. To prove the reverse inclusion, it is enough to see that when  $q\in [0, \pi]$, $C$ ranges from $0$ to $+\infty$ and $h$ from $-\infty$ to $+\infty.$

The pair of second equations is simpler: it is clear that they both describe parts of the line $C = 2h$. However, restrictions imposed by square roots in (\ref{eq: roots y0}) stipulate that the part in question is a ray starting at the point $(C,h) = (2,1)$. In order to complete our proof, we need to show that the part of the line described by the second equation in (\ref{eq: relative equilibria q m3}) is the same ray. This can be easily seen by finding a minimum of the function $C(m_3)= m_3^2 + \frac{1}{m_3^2}$; it is achieved when $m_3 = \pm1$ and $C(\pm1) = 2$. 

Thus, the two bifurcation diagrams are the same. 
\end{proof}
\section{Contact structure on $\Sigma_{C,h }$}\label{contact_section}
In this Section, we are going to prove that $\Sigma_{C,h}$ is of contact type when its projection to $S^2_C$ is diffeomorphic to a cylinder and if $h$ is sufficiently negative (see Theorem \ref{th. contact type}).
For the sake of completeness, we first summarize a few results from contact geometry (see \cite{albers2012contact,geiges2006contact} and many others).

\begin{definition} Let $X$ be a manifold of dimension $2n-1$.
A contact form $\alpha$ on $X$ is a 1-form such that $\alpha\wedge d\alpha^{n-1}$
is a nowhere vanishing volume form on $X$. $(X, \alpha)$ is called a (co-oriented) contact manifold. 
\end{definition}

In contact geometry, one is often interested in a weaker notion, that of a contact structure on $X$: this is just a hyperplane distribution $\eta\subset TX$ which is maximally non-integrable (locally $\eta=\ker(\alpha)$ where $\alpha$ is a (local) 1-form and the condition $\alpha\wedge d\alpha^{n-1}\neq 0$ means that $\eta$ is maximally non-integrable). Notice that if $\alpha$ is a contact form, then $d\alpha_{|\eta}$ is symplectic on $\eta$. For our purposes, we will always use the stronger notion of co-oriented contact manifold $(X, \alpha),$ since this is the notion of contact manifold related to dynamics. 

Indeed, given $(X, \alpha)$ there is associated to it a canonical vector field, called the Reeb vector field $R_{\alpha}$ which is defined via the two conditions $i_{R_{\alpha}}d\alpha=0$ and $i_{R_{\alpha}}\alpha=1$.
Let us remark that the Reeb vector field is always transverse to the contact distribution $\eta:=\ker(\alpha)$ induced by $\alpha,$ since, as we remarked above $d\alpha_{|\eta}$ is symplectic. 

It turns out that in some situations the Reeb vector field is just a (positive) time reparametrization of a Hamiltonian vector field. For this to occur, let $(M, \omega)$ be a symplectic manifold, let $H:M \rightarrow \mathbb{R}$ be a smooth Hamiltonian and let $X:=H^{-1}(e)$, where $e$ is a regular value of $H.$ Assume that there exists a tubular neighborhood $U$ of $X$ in $M$ such that $\omega_{|U}$ is exact, say equal to $d\lambda.$ Thus on $U$ it makes sense to look for a vector field $V$ such that $i_V\omega=\lambda.$ Any such a vector field is called a {\em Liouville} vector field. In particular, if $V$ is a Liouville vector field, it follows automatically that $L_V\omega=\omega$, where $L_V$ is the Lie derivative along $V$. Thus $V$ acts as a symplectic dilation on $U$.

The following important result holds:
\begin{theorem}\label{cont.th.1}
In the situation above, if $V$ is transverse to $X$, then $i_V\omega_{|X}=\lambda_{|X}$ is a contact form $\alpha$ on $X$. Furthermore, the Reeb vector field $R_{\alpha}$ associated to this contact form is a (positive) time reparametrisation of the Hamiltonian vector field $Y_H$
\end{theorem}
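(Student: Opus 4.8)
The plan is to set $\dim M = 2n$, so that $X$ is a hypersurface of dimension $2n-1$ as in the definition of a contact form, and to verify the two assertions separately, relying throughout on a single algebraic identity linking the Liouville contraction to the symplectic volume. I will adopt the convention $i_{Y_H}\omega = -dH$ for the Hamiltonian vector field.

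\emph{Contact condition.} First I would note that $d\alpha = d(\lambda|_X) = (d\lambda)|_X = \omega|_X$, so the requirement $\alpha\wedge(d\alpha)^{n-1}\neq 0$ becomes $\lambda\wedge\omega^{n-1}|_X\neq 0$. The key is the antiderivation identity
\[
i_V(\omega^n) = n\,(i_V\omega)\wedge\omega^{n-1} = n\,\lambda\wedge\omega^{n-1},
\]
which holds on $U$ because $i_V\omega=\lambda$ and $\omega$ has even degree. Since $\omega$ is symplectic, $\omega^n$ is a volume form on $M$; and for any vector field transverse to a hypersurface, the contraction of a nonvanishing top form restricts to a nonvanishing top form on that hypersurface. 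Hence $\lambda\wedge\omega^{n-1}|_X = \tfrac1n\, i_V(\omega^n)|_X$ is a volume form on $X$, so $\alpha$ is a contact form. This is the step where the transversality of $V$ is used.

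\emph{Characteristic line field.} Next I would identify the kernel of $\omega|_X$. As $X$ is a hypersurface, $\omega|_X$ is a $2$-form on an odd-dimensional manifold, so its kernel is a line field $\ell$. With the chosen convention, $dH(Y_H) = -\omega(Y_H,Y_H) = 0$, so $Y_H$ is tangent to $X = H^{-1}(e)$; moreover $\omega(Y_H,W) = -dH(W)=0$ for every $W\in TX$, so $Y_H\in\ell$. Since $e$ is a regular value, $dH\neq 0$ and hence $Y_H\neq 0$ on $X$, so $Y_H$ spans $\ell$. The Reeb field satisfies $i_{R_\alpha}d\alpha = i_{R_\alpha}(\omega|_X)=0$, hence $R_\alpha\in\ell$ and therefore $R_\alpha = f\,Y_H$ for a scalar function $f$ on $X$. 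Imposing $\alpha(R_\alpha)=1$ gives $f\,\lambda(Y_H)=1$, and a short computation yields $\lambda(Y_H) = \omega(V,Y_H) = dH(V)$, so that $f = 1/dH(V)$, finite and nonzero precisely because $V$ is transverse to $X$. Thus $R_\alpha = f\,Y_H$ exhibits $R_\alpha$ as a time reparametrisation of $Y_H$.

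\emph{Main obstacle.} The genuinely delicate point is the \emph{positivity} of $f = 1/dH(V)$: the co-orientation of $X$ induced by $\alpha$, the sign convention for $Y_H$, and the direction of $V$ must all be tracked consistently. In the relevant regime the Liouville field $V$ is arranged to point toward increasing $H$, so that $dH(V)>0$ and hence $f>0$, giving the asserted \emph{positive} reparametrisation; this is also what matches the star-shaped mechanical picture recalled in the Introduction. Once this orientation bookkeeping is pinned down, the three steps combine to give both conclusions of the theorem.
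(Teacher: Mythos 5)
Your proposal is correct. Note first that the paper contains no internal proof of Theorem \ref{cont.th.1}: immediately after the statement it defers to the literature (``For a proof and further details see for instance \cite{frauenfelder2018restricted}''). So there is nothing in the paper to compare against; what you have written is the standard argument that this citation points to, and each of your steps is sound: $d\alpha=\omega|_X$ because restriction commutes with $d$; the antiderivation identity $i_V(\omega^n)=n\,(i_V\omega)\wedge\omega^{n-1}$ together with transversality of $V$ converts the contact condition into the nonvanishing of the contraction of the volume form $\omega^n$ along a hypersurface; the kernel of $\omega|_X$ is a line field spanned by $Y_H$ (a hyperplane in a symplectic vector space is coisotropic, and $e$ regular gives $Y_H\neq 0$); and $\alpha(R_\alpha)=1$ then forces $R_\alpha=Y_H/dH(V)$ with your convention $i_{Y_H}\omega=-dH$.

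The one place to tighten is the positivity discussion, and you are right that it is the delicate point: it genuinely cannot be extracted from transversality alone, so your phrase ``$V$ is arranged to point toward increasing $H$'' should be promoted from an aside to an explicit hypothesis (or to a remark that the conclusion holds up to a global sign on each component). Transversality gives $dH(V)\neq 0$ on $X$, hence $f=1/dH(V)$ has constant sign on each connected component, but both signs really occur: on $M=T^*S^1$ with $\omega=dp\wedge dq$, $H=p$ and $X=H^{-1}(0)$, the vector fields $V_\pm=(p\pm 1)\partial_p$ are both Liouville ($d\,i_{V_\pm}\omega=\omega$) and both transverse to $X$, yet $\alpha_\pm=\pm\,dq$ have Reeb fields $\pm\,\partial_q$, while $Y_H=\partial_q$; so $V_+$ yields a positive and $V_-$ a negative reparametrisation. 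Thus the word ``(positive)'' in the theorem tacitly encodes a co-orientation convention, namely that the Liouville field crosses $X$ in the direction of increasing $H$ (given your sign convention for $Y_H$). This does not affect the paper's application: in Theorem \ref{th. contact type} the Liouville field is constructed explicitly and its transversality to $\Sigma_{C,h}$ is checked by hand, so the sign can be read off directly there.
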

For a proof and further details see for instance \cite{frauenfelder2018restricted}. Part of the importance of this result stems from the fact that the Hamiltonian dynamics is described in a completely geometric manner. All the results about the dynamics of the Reeb vector field (like existence of periodic orbits) can be translated into results about Hamiltonian dynamics.

In the situation described by Theorem \ref{cont.th.1}, $(X, \alpha)$ is in particular a hypersurface of contact type inside the symplectic manifold $(M,\omega).$
More in general, following A. Weinstein (see \cite{Weinstein1979}) one has the following:
\begin{definition}\label{def.contact}
    Let $(M, \omega)$ be a symplectic manifold. Let $X\subset M$ be a smooth hypersurface and let $\mathcal{L}_X\subset TX$ be the characteristic line bundle spanned by the kernel of $\omega_{|X}.$ $X$ is called of contact type if there exists a 1-form $\alpha$ on $X$ such that \begin{enumerate}
        \item $d\alpha=\omega_{|X}$ and
        \item $\alpha(v)\neq 0$ for any non-zero $v\in \mathcal{L}_X.$
        \end{enumerate}      
        Then $\alpha$ is called a contact form on $X$.
\end{definition}
In the Definition \ref{def.contact}, $\alpha$ is indeed a contact form on $X$ since $d\alpha_{|\ker(\alpha)}$ is symplectic, so that $\alpha\wedge d\alpha^{n-1}$ defines a nowhere vanishing volume form on $X$. Observe that when $X:=H^{-1}(e)$ for a smooth function $H: M\rightarrow \mathbb{R}$ with regular value $e$, then the associated Hamiltonian vector field $Y_H$ is tangent to $X$ and $Y_H$ is a nowhere vanishing section of $\mathcal{L}_X.$
Thus, as in Theorem \ref{cont.th.1} both $Y_H$ and $R_{\alpha}$ are both nowhere vanishing sections of $\mathcal{L}_X$


Now we proceed to show that if $h$ is sufficienlty negative, $\Sigma_{C,h}$ is a hypersurface of contact type inside the symplectic manifold $(0,\pi)\times\mathbb{R}\times S^2_C$ (in $(q,\ p, \ m_i)$-coordinates) equipped with symplectic form  $$\omega = \mathrm{d}q\wedge \mathrm{d}p + i^*(m_1\mathrm{d}m_2\wedge\mathrm{d}m_3 + m_2\mathrm{d}m_3\wedge\mathrm{d}m_1+ m_3\mathrm{d}m_1\wedge\mathrm{d}m_2),$$
where $i:S^2_C\hookrightarrow \mathbb{R}^3$.

In order to account for the fact that $\{m_1,m_2,m_3\}\in S^2_C$, we make a spherical change of coordinates, substituting $m_1\to \sqrt{C} \cos (\theta ) \cos (\phi ),\ m_2\to \sqrt{C} \cos(\theta ) \sin (\phi),\ m_3\to \sqrt{C} \sin (\theta )$, with $\phi\in[0,2\pi)$ and $\theta\in\left[-\frac{\pi}{2},\frac{\pi}{2}\right]$. 
Thus the symplectic form on $(0,\pi)\times \mathbb{R}\times S^2_C$ written in local coordinates becomes: 
\begin{equation}
\label{eq: omega}
\omega = \mathrm{d}p\wedge \mathrm{d}q  + C^{\frac32}
\cos\left(\theta\right)\mathrm{d}\phi\wedge\mathrm{d}\theta. 
\end{equation}

In Section \ref{sec: topology} we state the inequality condition at a point on $S^2_C$ for preimage of it in $\Sigma_{C,h}$ not to be empty; here, we restate it in terms of $\theta$ and $\phi$:
\begin{equation}
    \label{eq: theta phi preimage condition}
   C \cos ^2(\theta )+\frac{\csc ^2(\theta )}{C}-2 C+2 \cot (\theta ) \sin (\phi )+4 h\ge 0.
\end{equation}
The Hamiltonian will have the form 
\begin{equation}
    \label{eq: Ham  in spherical}
   C \sin ^2(\theta ) \cot ^2(q)+p^2-\cot (q)  -\cos (\theta ) \left(\sqrt{C} p \cos (\phi )+C\sin (\theta ) \cot (q) \sin (\phi )\right)= h-\frac C2
\end{equation}

The  cylindrical subset of $S^2_C$, as the one described by the equation (\ref{eq: theta phi preimage condition}), unlike the entire $S^2_C$, admits a Liouville vector field. Since $\omega$ has two 'independent' parts (see equation \eqref{eq: omega}), so will the Liouville vector field. 
\begin{rmk}
\label{st: theta near 0}
    As $h\to-\infty$, the width of the projection $\Sigma_{C,h}\to S^2_C$ tends to 0. Namely, $\theta$ is contained within a very narrow neighbourhood of 0. We refer to these values of $\theta$ as \textit{permitted values}.
\end{rmk}

\begin{theorem}\label{th. contact type}
    The manifold $\Sigma_{C,h}$ is of contact type when $h = h(C)$ is sufficiently negative. 
\end{theorem}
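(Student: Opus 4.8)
The plan is to verify the hypotheses of Theorem \ref{cont.th.1} (equivalently Definition \ref{def.contact}) directly: I must produce, on a tubular neighbourhood of $\Sigma_{C,h}$ inside $(0,\pi)\times\mathbb{R}\times S^2_C$, a primitive $\lambda$ of the symplectic form \eqref{eq: omega}, together with its Liouville field $V$ (defined by $i_V\omega=\lambda$), and then show that $V$ is transverse to $\Sigma_{C,h}$, i.e. that $\lambda(Y_{\mathcal H})=\omega(V,Y_{\mathcal H})$, which up to sign equals $\mathrm{d}\mathcal H(V)$, is nowhere zero on $\Sigma_{C,h}$. The restriction to the case where $\pi(\Sigma_{C,h})$ is a cylinder is precisely what makes $\omega$ exact near $\Sigma_{C,h}$: the term $\mathrm{d}p\wedge\mathrm{d}q$ is globally exact, while the area form $C^{3/2}\cos\theta\,\mathrm{d}\phi\wedge\mathrm{d}\theta$ of $S^2_C$ fails to be exact only because of the two polar caps that have been excised, so on the remaining equatorial band it admits a primitive. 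This is the content of the remark that only the cylindrical subset carries a Liouville field.

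Following that remark I would take $\lambda=p\,\mathrm{d}q-C^{3/2}\sin\theta\,\mathrm{d}\phi$, whose two summands primitivise the two pieces of $\omega$; the associated Liouville field is the product field $V=p\,\partial_p+\tan\theta\,\partial_\theta$, which dilates the fibre coordinate $p$ and pushes the latitude $\theta$ away from the equator on each side, so that it points out of the band at both boundary circles. With this choice the transversality quantity $\mathrm{d}\mathcal H(V)=p\,\partial_p\mathcal H+\tan\theta\,\partial_\theta\mathcal H$ can be computed explicitly; using the Casimir to eliminate $m_1^2$ and the completed-square energy relation \eqref{eq:full squares}, it collapses on $\Sigma_{C,h}$ to the shape $\sec^2\theta\,(m_3^2\xi^2+p^2)+(1-\tan^2\theta)\,(\xi+h-\tfrac{C}{2})$. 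The point of invoking Remark \ref{st: theta near 0} is that for $h\to-\infty$ the whole surface sits in $\{|\theta|\ll 1\}$, so $\tan\theta$ is small and the first, manifestly non-negative, term dominates wherever $\xi$ is comparable to its typical size $O(|h|)$; the identity \eqref{eq:full squares} confines $\xi$ to an interval $[\xi_{\min},\xi_{\max}]$ with $\xi_{\min}>0$, which is what should prevent the bracket $\xi+h-\tfrac{C}{2}$ from ever dropping as low as the quadratic term.

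The step I expect to be the real obstacle is establishing this non-vanishing \emph{uniformly}, and especially near the equator $\theta=0$. There the $\partial_\theta$-part of $V$ degenerates and $\mathrm{d}\mathcal H(V)$ collapses to $p(2p-m_1)$ on the fibre parabolas $\{m_3=0\}$, an expression that changes sign on $\{0<p<m_1/2\}$; moreover an expansion of the fibrewise minimum of $\mathrm{d}\mathcal H(V)$ as $m_3\to 0$ reveals that the dominant $O(|h|)$ contributions cancel, so the sign is decided by an $O(1)$ remainder that does \emph{not} improve as $h\to-\infty$. This signals that the bare product primitive must be corrected: I would modify $\lambda$ by an exact form $\mathrm{d}g$—equivalently add a Hamiltonian vector field to $V$—choosing $g$ (for instance a suitable multiple of $m_1$, or more generally a function odd in the in-plane angle $\phi$) so as to inject a non-degenerate latitude drift on the equator and restore a definite sign of $\mathrm{d}\mathcal H(V)$ there, while checking that the correction does not destroy positivity on the bulk of the band. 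Pinning down a single primitive for which $\mathrm{d}\mathcal H(V)$ is nowhere zero on all of $\Sigma_{C,h}$, equator included, is the heart of the matter; granting it, Theorem \ref{cont.th.1} immediately delivers the contact form $\alpha=\lambda|_{\Sigma_{C,h}}$ and exhibits the Reeb field as a positive reparametrisation of $Y_{\mathcal H}$.
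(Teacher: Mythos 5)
Your framework is the right one (it matches the paper's: build a Liouville field for $\omega$ on the equatorial band and prove transversality), and your diagnosis of the equatorial obstruction is exactly correct: for the product primitive $\lambda=p\,\mathrm{d}q-C^{3/2}\sin\theta\,\mathrm{d}\phi$ with $V=p\,\partial_p+\tan\theta\,\partial_\theta$, one has $\mathrm{d}\mathcal{H}(V)=p(2p-m_1)$ on the fibres over $\{m_3=0\}$, which changes sign. But at that point the proposal stops: you explicitly defer ``pinning down a single primitive for which $\mathrm{d}\mathcal{H}(V)$ is nowhere zero,'' and that deferred step \emph{is} the proof. Your suggested repair (add $\mathrm{d}g$ with $g$ a multiple of $m_1$, or odd in $\phi$) does not work as stated: the induced correction to $V$ is $-\tfrac{c\tan\theta\cos\phi}{C}\,\partial_\phi+\tfrac{c\sin\phi}{C}\,\partial_\theta$, whose contribution to $\mathrm{d}\mathcal{H}(V)$ on the equator is proportional to $\sin^2\phi$, hence vanishes at $\phi=0,\pi$ where the original sign change persists. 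The paper's fix is of a different nature: it takes the ansatz $X=\tfrac12\bigl((p-f_1)\partial_p+(q-f_2)\partial_q+f_3\partial_\phi+f_4\partial_\theta\bigr)$, solves the PDE system $L_X\omega=\omega$ (which forces $f_3,f_4$ in terms of $f_1,f_2$), and chooses $f_1=\tfrac{\sqrt{C}\cos\theta\cos\phi}{2}=\tfrac{m_1}{2}$ and $f_2$ to be the $q$-coordinate of the centre of the fibre ellipse. This shift of the $(q,p)$-centre turns the equator computation into a sum of a perfect square and $q\csc^2q>0$, and in the interior the transversality follows from a geometric monotonicity argument (a central field based at the centre of the fibre curve cannot be tangent to it).

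The second, equally serious omission is that you never treat the points of $\Sigma_{C,h}$ projecting to the \emph{boundary} circles of $\pi(\Sigma_{C,h})$, where the fibre degenerates to a single point $(p,q)=(f_1,f_2)$ and the radial part of any such field vanishes identically; there transversality must come entirely from the $\partial_\phi,\partial_\theta$ components. This is where the paper does its hardest work (the lemma that the function $F(\theta)$ in \eqref{eq: pokemon final form} is positive), and it is the only place where the hypothesis ``$h$ sufficiently negative'' is used quantitatively: the expansion $F(\theta)=\tfrac{2C+3}{C^2\theta^2}+O(1)$ has leading term independent of $h$, while the permitted band of $\theta$ shrinks as $h\to-\infty$, so positivity holds on the whole band only for $h$ negative enough. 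In your proposal the role of $h\to-\infty$ is invoked only heuristically in the bulk (where it is in fact not needed once the centre is shifted), and the boundary estimate that actually necessitates it is absent. So the proposal is a correct reduction of the problem plus a correct identification of the obstruction, but the two constructions that constitute the proof --- the globally transverse Liouville field and the boundary positivity estimate --- are missing.
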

\begin{proof}

Firstly, observe that  per the bifurcation diagram \ref{fig: bifurcation}, for every value of $C$ we can find $h(C)$ that is negative enough, such that $\pi(\Sigma_{h,C})$ is the complement of two open disks on $S^2$. Secondly, by Remark \eqref{st: theta near 0}  this complement can be made as narrow in $\theta$ as desired by decreasing the value of $h$.  

The proof is based on constructing a Liouville vector field $X$ in a tubular neighborhood of $\Sigma_{C,h}$ inside the symplectic manifold $(0,\pi)\times \mathbb{R}\times S^2_C$.

We divide it in four parts. In the first part, we construct the Liouville vector $X$, but do not completely determine it.

In the second part, we consider points $R$ in the interior of $\pi(\Sigma_{C,h})\subset S^2_C$ and for which $\theta\neq 0$ (points that are not on the equator of $S^2_C$). Then $\pi^{-1}(R)$ consists of a simple closed curve in the $(q.p)$ plane like the one in Figure \ref{fig: pq plane}. In this step we prove that such curves are always transverse in the $(q,p)$ plane to a central vector field  with  a centre in a certain point, whose coordinates depend on $\theta$ and $\phi$. This in turns allows us to determine uniquely the Liouville vector field $X$ so that $X$ is transverse to $\Sigma_{C,h}$ at points whose projection on $S^2_C$ lies in the interior of $\pi(\Sigma_{C,h})$ and does not lie on the equator of $S^2_C.$ 

The third step deals with showing that the Liouville vector field thus obtained is transverse to $\Sigma_{C,h}$ also at points whose projection on $S^2_C$ lies on the equator, i.e. for $\theta=0.$

The last step is focused on showing that the Liouville vector fields is transverse to $\Sigma_{C,h}$ at those points that project to the boundary of $\pi(\Sigma_{C,h})$.

{\bf Step 1} 
For the Liouville vector field $X$ we consider the following ansatz:
\begin{equation}\begin{split}
    \label{eq: first Liouv}
    X  = \frac12\left(\left((p-f_1(\phi,\theta)\right)\frac{\partial}{\partial p} + \left(q - f_2(\phi,\theta)\right)\frac{\partial}{\partial q} + f_3(p,q,\phi,\theta)\frac{\partial}{\partial \phi} + f_4(q,p,\phi,\theta)\frac{\partial}{\partial \theta}\right).
    \end{split}
\end{equation}
\begin{figure}
    \centering
 \subfigure[$\theta = -1/3,\ h=-10, \ C = \frac12,\ \phi = \frac{\pi}{4}$]{\includegraphics[scale = .7]{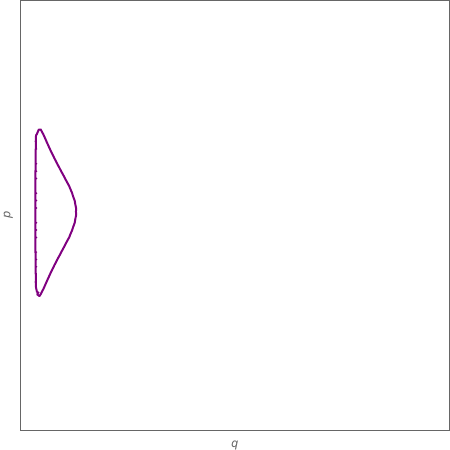}}
    \subfigure[$\theta = 0,\ h=-10, \ C = \frac12,\  \phi = \frac{\pi}{4}$]{\includegraphics[scale = .7]{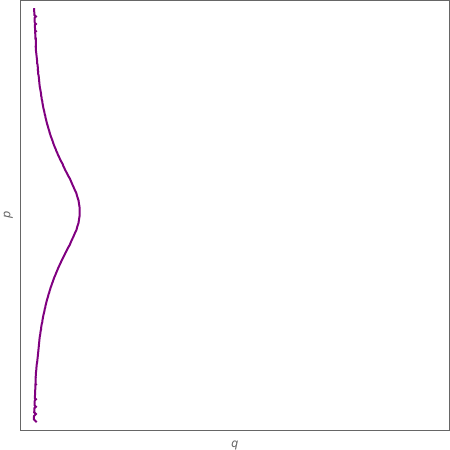}}
   \caption{Intersection of $\Sigma_{C,h}$ with $(q,p)$-plane for various values of $\theta$}
   \label{fig: pq plane}
\end{figure}

Bringing together the like terms and imposing that $X$ in \eqref{eq: first Liouv} is a Liouville vector field for the symplectic form $\omega$ in \eqref{eq: omega}, i.e. $L_X\omega=\omega$ we obtain:
\begin{small}
\begin{equation}
\begin{aligned}
    &\frac12\Biggl[\left(\partt{\phi}{f_2} + C^{\frac32}\cos(\theta)\partt{p}{f_4}\right)\dd{\phi}{p} + \left(\partt{\theta}{f_2}-C^{\frac32}\cos(\theta)\partt{p}{f_3}\right)\dd{\theta}{p} + 2\dd{p}{q}\\&+  \left(-\partt{\phi}{f_1} +C^{\frac32}\cos(\theta)\partt{q}{f_4}\right)\dd{\phi}{q}+\left(-\partt{\theta}{f_1}- C^{\frac32}\cos(\theta)\partt{q}{f_3}\right)\dd{\theta}{q} \\&+ C^{\frac32}\left(\cos(\theta)\partt{\phi}{f_3} + \cos(\theta)\partt{\theta}{f_4} - \sin(\theta)f_4\right)\dd{\phi}{\theta}\Biggr] =\omega.
\end{aligned}
\end{equation}
\end{small}
This entails the following system of partial differential equations:
\begin{equation}
    \label{eq: sys for omega}
\begin{cases}
\cos(\theta)\partt{\phi}{f_3} + \cos(\theta)\partt{\theta}{f_4} - \sin(\theta)f_4 =2\cos(\theta),\\
  \partt{\phi}{f_2} + C^{\frac32}\cos(\theta)\partt{p}{f_4}=0,\\
    \partt{\theta}{f_2}-C^{\frac32}\cos(\theta)\partt{p}{f_3}=0,\\
   \partt{\phi}{f_1} -C^{\frac32}\cos(\theta)\partt{q}{f_4}=0,\\
   \partt{\theta}{f_1}+ C^{\frac32}\cos(\theta)\partt{q}{f_3} = 0.
\end{cases}
\end{equation}
The general solution to this system has the form
\begin{figure}
    \centering
    \includegraphics[scale=.9]{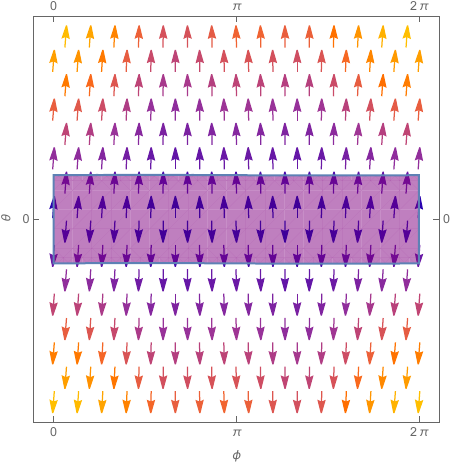}
    \caption{The projection of $\Sigma_{C,h}$ to $S^2$ (in purple) and projection of $X$ with $p$ and $q$ as in (\ref{eq: boundary p and q})}
    \label{fig: proj sigma s2}
\end{figure}
\begin{equation}
\label{eq: sol to the system}
\begin{cases}
    f_3 =\frac{1}{C^{\frac32}\cos(\theta)}\partt{\theta}{f_2}p - \frac{1}{C^{\frac32}\cos(\theta)}\partt{\theta}{f_1}q + m(\phi,\theta), \\
    f_4 =-\frac{1}{C^{\frac32}\cos(\theta)}\partt{\phi}{f_2}p +\frac{1}{C^{\frac32}\cos(\theta)}\partt{\phi}{f_1}q + l(\phi,\theta) \\
    \cos(\theta)\partt{\theta}{l(\phi,\theta)} + \cos(\theta)\partt{\theta}{m(\phi,\theta)} -\sin(\theta)l(\phi,\theta) = 2\cos(\theta).

\end{cases}
\end{equation}
We will determine explicitly the coefficients of $X$ in Step 2. Notice that it depends on two arbitrary functions $f_1,f_2.$

{\bf Step 2}
For illustrative purposes,  we will refer to Figure \ref{fig:curve inpq}; the actual curve will be more stretched vertically but will in principle be of the same shape. 

The expression \ref{eq: Ham  in spherical} is quadratic in $p$; therefore, we can solve it as a regular quadratic equation, obtaining 
\begin{footnotesize}
    \begin{equation}
    \begin{split}
        \label{eq: for p}
p&= \frac{1}{2} \left(\sqrt{C} \cos (\theta ) \cos (\phi )-\sqrt{C\cos ^2(\theta ) \cos ^2(\phi )+4 C \sin (\theta ) \cos (\theta ) \cot (q) \sin (\phi )-4 C \sin ^2(\theta ) \cot ^2(q)-2 C+4 (h+\cot (q))}\right)\\ p&= \frac{1}{2} \left(\sqrt{C} \cos (\theta ) \cos (\phi )+\sqrt{C \cos ^2(\theta ) \cos ^2(\phi )+4 C \sin (\theta ) \cos (\theta ) \cot (q) \sin (\phi )-4 C \sin ^2(\theta ) \cot ^2(q)-2 C+4 (h+\cot (q))}\right)
      \end{split}
    \end{equation}
    \end{footnotesize}
As we remarked above, our shape is symmetric; therefore, we can consider only the expression for the upper part of it, i.e. the second line of (\ref{eq: for p}).

The expression under the square root ,
    \[
C \cos ^2(\theta ) \cos ^2(\phi )+4 C \sin (\theta ) \cos (\theta ) \cot (q) \sin (\phi )-4 C \sin ^2(\theta ) \cot ^2(q)-2 C+4 (h+\cot (q)),
    \]

       can be easily seen to be a quadratic polynomial in $\cot(q)$. 
       
       It has one maximum, denoted in Figure \ref{fig:curve inpq} by  B since its leading coefficient is strictly negative, at least when $\theta\neq0, \pi$ which holds under our assumptions.

       We draw a vertical line $l$ through this point, dividing $\Gamma$ into two parts. Additionally, we will refer to the horizontal line of symmetry as $m$ and their intersection as O. The point where $q$ achieves its minimal value is D and maximal value is C.

\begin{figure}
    \centering
    \includegraphics[scale=.7]{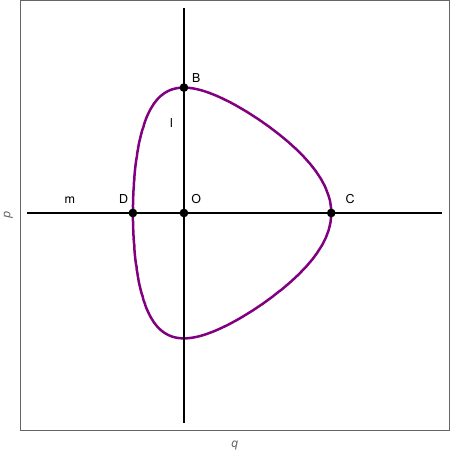}
    \caption{The intersection of $\Sigma_{C,h}$ with the $(p,q)$-plane and its division }
    \label{fig:curve inpq}
\end{figure}

We want to demonstrate the following:
\textbf{any  central vector field centered at O is transverse to $\Gamma$}.
As is was remarked above, the expression under the square root in (\ref{eq: for p}) has one maximum at 
\begin{equation}
 \label{eq: q max}   
q = \arctan\left(\frac{2 C}{C \cot (\theta ) \sin (\phi )+\csc ^2(\theta )}\right).
\end{equation}

Therefore, for every fixed $\theta$ and $\phi$, (\ref{eq: for p}) is a strictly increasing function when $q$ lies between D and O and strictly decreasing otherwise. 

This entails that the tangent vector field to $\Gamma$ to the points "north-east" to the left of  the line l and "south-west" to the right of it.  Any central vector field with centre at O, on the contrary, points "north-west" to the left of l and "north-east" to its left, since O$\in$l. 

Additionally, the tangent vector field to $\Gamma$ is vertical at D and C. This allows us to conclude that such a vector field will be nowhere tangent to $\Gamma$. 

This dictates our choice of $f_1$ and $f_2$: let 
\begin{equation*}
    \begin{split}
        f_1 &:= \frac{\sqrt{C} \cos (\theta ) \cos (\phi )}{2} ,\\
        f_2 &:= \arctan\left(\frac{2 C}{ C\cot (\theta ) \sin (\phi )+\csc ^2(\theta )}\right),
    \end{split}
\end{equation*}

as these are the coordinates of the point O in Figure \ref{fig:curve inpq}. Additionally, we observe that 
\[
\lim\limits_{\theta\to 0}\arctan\left(\frac{2 C}{ C\cot (\theta ) \sin (\phi )+\csc ^2(\theta )}\right) = 0,
\]
so $\lim
\limits_{\theta\rightarrow 0} f_2=0.$

To further simplify $X$ and uniquely fix it, we choose  $m(\phi,\theta) = 0,\ l(\phi,\theta) = 2\tan(\theta)$. Thus, our Liouville vector field of is
\begin{equation}
    \label{eq: final form X}
    \begin{split}
    X &= \frac12\left(p- \frac{\sqrt{C} \cos (\theta ) \cos (\phi )}{2} \right)\partt{p}{} + 
    \frac12\left(q -  \arctan\left(\frac{2 C}{ C\cot (\theta ) \sin (\phi )+\csc ^2(\theta )}\right) \right)\partt{q}{}\\ 
    &+   \left(\frac{2 p \csc ^2(\theta ) \sec (\theta ) (C \sin (\phi )+2 \cot (\theta ))}{\sqrt{C} \left(4 C^2+\left(C \cot (\theta ) \sin (\phi )+\csc ^2(\theta )\right)^2\right)}+\frac{q \tan (\theta ) \cos (\phi )}{2 C}\right)\partt{\phi}{}\\& + \left( \frac{2\sqrt{C} p \csc (\theta ) \cos (\phi )}{4 C^2+\left(C \cot (\theta ) \sin (\phi )+\csc ^2(\theta )\right)^2}+2\tan (\theta )-\frac{q \sin (\phi )}{2 C}\right)\partt{\theta}{}.
    \end{split}
\end{equation}

{\bf Step 3} Recall that the equator in $S^2_C$ is always contained in the interior of $\pi(\Sigma_{C,h})$ and that the curves about it on the $(p,q)$ plane are unbounded. Now we check directly that \textbf{$X$ is transverse to such points in $\Sigma_{C,h}$}. We first observe that:
$\lim\limits_{\theta\to 0} f_3(\theta,\phi,p,q) = 0$.
Now, restricted to the set $\theta = 0$, the Hamiltonian turns into 
    \[
p^2-\sqrt{c} p \cos (\phi )-\cot (q) = h-\frac C2,
    \]
with $X(H)$ being equal to 
\begin{equation}
\label{eq: dir der theta 0}
2p^2-2p\sqrt{C}\cos(\phi)+\frac{C}{2}\cos(\phi)^2+q\csc(q)^2 = \left(\sqrt{2}p 
 - \frac{\sqrt{C}\cos(\phi)}{\sqrt{2}}\right)^2 + q \csc^2(q)>0,
\end{equation}
seeing as $q\in(0,\pi)$.

{\bf Step 4}

Now we show that \textbf{$X$ is transverse to $\Sigma_{C,h}$ at the points that project to the boundary of $\pi(\Sigma_{C,h})$}.
We only need to show that $X$ is transverse to the shape in Figure \ref{fig: proj sigma s2} on boundary points only; this is what we set out to do. 
  
  As remarked above, for all these points on $S^2_C$ the preimage is a single point in $p$ and $q$, namely, 
\begin{equation}
\label{eq: boundary p and q}
\begin{cases}
    p=\frac{\sqrt{C} \cos (\theta ) \cos (\phi )}{2},\\
    q=\arctan\left(\frac{2 C}{C \cot (\theta ) \sin (\phi )+\csc ^2(\theta )}\right)
\end{cases}
\end{equation}
From setting (\ref{eq: theta phi preimage condition})=0, we obtain that
\begin{equation}
\label{eq: boundary sin phi}
\begin{cases}
\sin(\phi) = (C-2 h) \tan (\theta )-\frac{\csc (2 \theta )}{C}-\frac{1}{2} C \sin (\theta ) \cos (\theta ), \\ \cos(\phi)  = \pm\sqrt{1-\left(\frac{1}{4} \tan (\theta ) (C \cos (2 \theta )-3 c+8 h)+\frac{\csc (2 \theta )}{C}\right)^2}
\end{cases}
\end{equation}
nominally giving us two cases for $\cos(\phi)$. 

Tranversality of $X$ with respect to the boundary is checked showing that the scalar product with the normal to the boundary is never vanishing. The (outward pointing) normal in question is given by
\begin{equation}
    \label{eq: normal}
\left\{-2 \cot (\theta ) \cos (\phi ),\frac{2 \csc ^2(\theta ) \left(C \left(C \sin ^3(\theta ) \cos (\theta )+\sin (\phi )\right)+\cot (\theta )\right)}{C}\right\}
\end{equation}

\begin{figure}
    \centering
    \includegraphics[scale=.9]{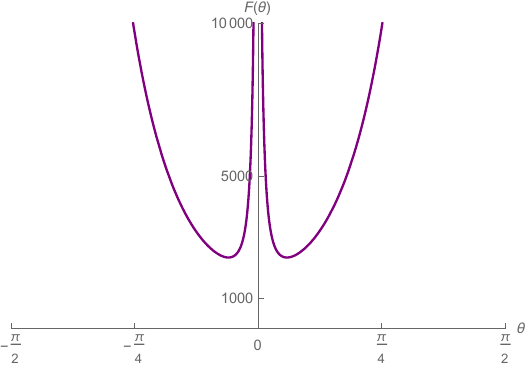}
    \caption{Graph of $F(\theta)$, with $C = 0.64, \ h= -1000$}
    \label{fig: gapf ftheta}
\end{figure}

The next step is taking the scalar product of (\ref{eq: normal}) and (\ref{eq: final form X}), substituting (\ref{eq: boundary p and q}) for $p$ and $q$ and  consequently the two  values of $\sin(\phi)$ and $\cos(\phi)$ from (\ref{eq: boundary sin phi}), all of which yields two functions in $\theta$, which we need to show have constant signs. However, the from of the two functions is identical (this can be verified by observing that the scalar product is quadratic in $\cos(\phi)$. 

The resulting function has the form
\begin{footnotesize}
\begin{equation}
    \label{eq: pokemon final form}
\begin{split}
    F(\theta) &= \frac1C\left(C^2 \sin (2 \theta )-\cot (\theta ) \left(C^2-2 \csc ^2(\theta )\right)+2 C (C-2 h) \csc (\theta ) \sec (\theta )+\csc ^3(\theta ) (-\sec (\theta ))\right)\\& \Biggl[\frac{C \cot (\theta ) \left(1-\left(\frac{1}{4} \tan (\theta ) (C \cos (2 \theta )-3 C+8 h)+\frac{\csc (2 \theta )}{C}\right)^2\right)}{4 C^2+\frac{1}{4} \left(\csc ^2(\theta )-C \left(C \cos ^2(\theta )-2 C+4 h\right)\right)^2}+\\&\frac{\tan (\theta ) \left(C^2 \cos ^2(\theta )-2 C^2+4 Ch+\csc ^2(\theta )\right) \arctan\left(\frac{4 C \csc (\theta )}{C^2 \sin (\theta )+ C^2-4 Ch \csc (\theta )+\csc ^3(\theta )}\right)}{4 C^2}+2 \tan (\theta )\Biggr]-\\&\cot (\theta ) \left(1-\left(\frac{1}{4} \tan (\theta ) (C\cos (2 \theta )-3 C+8 h)+\frac{\csc (2 \theta )}{C}\right)^2\right)\\& \Biggl[\frac{\tan (\theta ) \arctan\left(\frac{4 C \csc (\theta )}{C^2 \sin (\theta )+ C^2-4C h \csc (\theta )+\csc ^3(\theta )}\right)}{C}+\frac{2 \csc ^2(\theta ) \left(-\frac{1}{4} C \tan (\theta ) (C \cos (2 \theta )-3 C+8 h)+2 \cot (\theta )-\csc (2 \theta )\right)}{4 C^2+\frac{1}{4} \left(\csc ^2(\theta )-C \left(C \cos ^2(\theta )-2 C+4 h\right)\right)^2}\Biggr]
\end{split}
\end{equation}
\end{footnotesize}
In order to complete our proof, we need the following
\begin{lemma}
    $F(\theta)>0$ for all permitted values of  $\theta$.
\end{lemma}
\begin{proof}

    The plot of $F(\theta)$ is depicted in Figure \ref{fig: gapf ftheta} for a specific pair $(C,h)$. It clearly is positive and has two minima; however, determining the coordinates of these points analytically is in practice impossible. We will circumvent this by the following: it can be shown that when $\theta\to 0$, 
\begin{equation*}
    \begin{split}
F(\theta) &=\frac{2 C+3}{C^2 \theta ^2} +\left(\frac{1}{C^2}+\frac{36 h-\frac{4}{3}}{C}+2 C-8 h-17\right)\\&
+\frac{\theta ^2 \left(C \left(5 C \left(24 \left(4 C^2-24 C h+C+32 h^2\right)-24 h-127\right)-28\right)+3\right)}{15 C^2} + \overline{O}(\theta^4).
\end{split}
\end{equation*}
This entails that $F(\theta)\to+\infty$ when $\theta\to0$, but also that the speed with which it does so doesn't depend on $h$. However, the width of the strip of permitted values of $\theta$ \textit{does}
depend on $h$ (Remark \ref{st: theta near 0}), and therefore for every value of $C$ we can make $h$ sufficiently negative so that for all values smaller than that $F(\theta)$ will be positive. $F(\theta)$ being positive yields that $X$ is always transverse at those points of $\Sigma_{C,h}$ whose projection is the boundary of of the image of $\pi,$ if the energy is sufficiently negative. 
\end{proof}
 Thus $X$ is everywhere transverse to $\Sigma_{C,h}$ and this proves that $\Sigma_{C,h}$ is a contact type hypersurface.  
\end{proof}


\begin{thebibliography}{10}

\bibitem{albers2012contact}
P.~Albers, U.~Frauenfelder, O.~V. Koert, and G.~P. Paternain.
\newblock Contact geometry of the restricted three-body problem.
\newblock {\em Communications on pure and applied mathematics}, 65(2):229--263,
  2012.

\bibitem{arsie2023collision}
A.~Arsie and N.~A. Balabanova.
\newblock Collision trajectories and regularisation of two-body problem on
  $S^2$.
\newblock {\em Journal of Geometry and Physics}, page 104883, 2023.

\bibitem{bolsinov2004integrable}
A.~Bolsinov and A.~Fomenko.
\newblock {\em Integrable Hamiltonian systems: geometry, topology,
  classification}.
\newblock CRC press, 2004.

\bibitem{borisov2018reduction}
A.~Borisov, L.~Garc{\'\i}a-Naranjo, I.~Mamaev, and J.~Montaldi.
\newblock Reduction and relative equilibria for the two-body problem on spaces
  of constant curvature.
\newblock {\em Celestial Mechanics and Dynamical Astronomy}, 130:1--36, 2018.

\bibitem{fomenko2013algebra}
A.~T. Fomenko and A.~Konyaev.
\newblock Algebra and geometry through hamiltonian systems.
\newblock In {\em Continuous and Distributed Systems: Theory and Applications},
  pages 3--21. Springer, 2013.

\bibitem{frauenfelder2018restricted}
U.~Frauenfelder and O.~Van~Koert.
\newblock {\em The restricted three-body problem and holomorphic curves}.
\newblock Springer, 2018.

\bibitem{garcia2021attracting}
L.~C. Garc{\'\i}a-Naranjo and J.~Montaldi.
\newblock Attracting and repelling 2-body problems on a family of surfaces of
  constant curvature.
\newblock {\em Journal of Dynamics and Differential Equations},
  33(4):1579--1603, 2021.

\bibitem{geiges2006contact}
H.~Geiges.
\newblock Contact geometry.
\newblock In {\em Handbook of differential geometry}, volume~2, pages 315--382.
  Elsevier, 2006.

\bibitem{moreno2022contact}
A.~Moreno.
\newblock Contact geometry in the restricted three-body problem: a survey.
\newblock {\em Journal of Fixed Point Theory and Applications}, 24(2):29, 2022.

\bibitem{ratiu2005crash}
T.~Ratiu.
\newblock {\em A crash course in geometric mechanics}.
\newblock PhD thesis, Quantifization and Harmonic Analysis, 2005.

\bibitem{shchepetilov2006}
A.~V. Shchepetilov.
\newblock Nonintegrability of the two-body problem in constant curvature
  spaces.
\newblock {\em Journal of Physics A: Mathematical and General}, 39:5787--5806,
  2006.

\bibitem{Weinstein1979}
A.~Weinstein.
\newblock On the hypothese of rabinowitz's periodic orbit theorems.
\newblock {\em Journal of Differential Equations}, 39(33):353--358, 1979.

\end{thebibliography}
\end{document}